\newtheorem{theorem}{Theorem}
\newtheorem{definition}{Definition}
\newtheorem{proposition}{Proposition}
\newtheorem{lemma}{Lemma}
\newtheorem{remark}{Remark}
\newtheorem{assumption}{Assumption}
\newcommand{\ba}{\begin{array}}
\newcommand{\ea}{\end{array}}
\newcommand{\be}{\begin{equation}}
\newcommand{\ee}{\end{equation}}
\newcommand{\ds}{\displaystyle}
\newcommand{\mc}{\mathcal}
\newcommand{\ov}{\overline}
\newcommand{\Z}{\mathbb{Z}}
\def\1{\boldsymbol{1}}
\def\0{\boldsymbol{0}}
\newcommand{\R}{\mathbb{R}}
\newcommand{\V}{\mathcal{V}}
\newcommand{\G}{\mathcal{G}}
\DeclareMathOperator*{\argmin}{argmin}
\def\Z{\mathbb{Z}}
\def\R{\mathbb{R}}
\def\diag{{\rm diag}\,}
\tikzstyle{v_c}=[circle, draw,inner sep=2pt, minimum width=12pt, color=blue]
\tikzstyle{v_a}=[circle, draw,inner sep=2pt, minimum width=12pt, color=red]
\tikzstyle{edge} = [draw,thick,-,font=\small ]
\tikzstyle{label} = [draw,fill=black,font=\normalsize]
\def\G{{\mathcal G}}
\def\BibTeX{{\rm B\kern-.05em{\sc i\kern-.025em b}\kern-.08em
	T\kern-.1667em\lower.7ex\hbox{E}\kern-.125emX}}
\begin{document}
\title{\LARGE \bf Predictive Control Strategies for Sustaining Innovation Adoption \\on Multilayer Social Networks}
\author{Martina~Alutto, Qiulin Xu, Fabrizio Dabbene, Hideaki Ishii and Chiara Ravazzi 
	\thanks{Martina~Alutto is with the Division of Decision and Control Systems, KTH Royal Institute of
		Technology, SE-100 44 Stockholm, Sweden (e-mail: alutto@kth.se). 
		\\
		\indent Fabrizio Dabbene and Chiara Ravazzi are with the Institute of Electronics, Computer and Telecommunication Engineering, National Research Council of Italy, Politecnico di Torino, 10129 Torino, Italy (e-mail: {\{chiararavazzi;\,fabriziodabbene;\}@cnr.it}).
		\\
		\indent Qiulin Xu is with the Department of Computer Science, Institute of Science Tokyo, Yokohama 226-8502, Japan (e-mail: xu.q.76b5@m.isct.ac.jp).
		\\
		\indent Hideaki Ishii is with the Department of Information Physics and Computing, The University of Tokyo, Tokyo 113-8654, Japan (e-mail:hideaki\_ishii@ipc.i.u-tokyo.ac.jp).
	}
	\thanks{This work has been supported by the European Union -- Next Generation EU, Mission 4, Component 1, under the PRIN project {\em{TECHIE: A control and network-based approach for fostering the adoption of new technologies in the ecological transition}}, Cod. 2022KPHA24, CUP Master: D53D23001320006, CUP: B53D23002760006, and by Japan Science and Technology Agency as part of the ASPIRE program under Grant No. JPMJAP2402.}%
}

\thinmuskip=0.35mu  % Spazio per simboli stretti (es. x+y)
\medmuskip=0.35mu   % Spazio per relazioni (es. x = y)
\thickmuskip=0.35mu % Spazio per disuguaglianze (es. x < y)

\maketitle

\begin{abstract}
In this paper, we propose a novel compartmental model combining the dynamics of adoption with opinion formation over a multilayer network, and moreover we address an optimal control problem within this context. 
Specifically, the diffusion of innovation is described within a framework that takes into account the effect of social imitation through an exposition network and the evolution of opinions due to social interactions. Individuals may also temporarily abandon the technology/service due to dissatisfaction or external constraints. 
The analysis of the equilibrium points and their stability allows us to analytically derive the necessary and sufficient conditions guaranteeing the persistence of adoption.
Building on this result, we introduce a model predictive control (MPC) strategy that dynamically adapts interventions to the expected evolution of the system. Three types of policies are compared: policies aimed at influencing opinions, policies aimed at affecting the adoption rate through direct incentives, and interventions that improve the service/technology, leading to a reduction in individual dissatisfaction. Compared with optimal constant static control laws, the numerical results highlight the importance of accounting transient effects in order to increase the adoption at the same cost and the potential of predictive and adaptive strategies to support the sustainable diffusion of behaviors, offering policymakers scalable tools for effective interventions.
\end{abstract}
\begin{IEEEkeywords}
	Diffusion of innovations over networks, Network analysis and control
\end{IEEEkeywords}

\graphicspath{{Plots}}
\section{Introduction} 
The diffusion of innovations refers to the process through which new ideas, technologies, or behaviors spread within a social system \cite{Rogers2010}. This process never occurs in isolation: individuals adopt innovations through interactions with their peers and the surrounding environment. 
These diffusion processes across populations have long been studied using mathematical models; applications range from the spread of fake news to epidemic outbreaks, as well as the adoption of innovations and sustainable practices. The contagion metaphor naturally extends to the social domain, although infectious diseases have historically been the primary focus of epidemic modeling due to their profound impact on human societies. 

Although these processes occur in very different contexts, social diffusion and epidemic outbreaks share a similar underlying structure: both are driven by local interactions on networks, where individual states change through contacts with neighbors or via spontaneous transitions. Peer interactions have a significant impact on an individual's state during the dissemination of information \cite{bikhchandani1992theory}, rumors, and new technologies or behaviors \cite{Bass1969, Rogers2010}. People are frequently split into three categories in traditional social diffusion models: those who are actively spreading the innovation, those who are not, and those who are aware of it but are no longer doing so. Transitions between these states occur through pairwise interactions that can lead to large-scale propagation, showing a clear parallel with the dynamics of infectious diseases \cite{daley1964epidemics}.
Focusing on innovation adoption, classical economic models offer valuable insights into it. For instance, the Bass model \cite{Bass1969} describes adoption as the result of both individual decision-making and peer influence. Building on this perspective, threshold-based models highlight how collective adoption emerges once a critical mass of peers has adopted \cite{Granovetter1978}. More generally, network-based frameworks such as the linear threshold and independent cascade models \cite{Kempe2003} formalize adoption as a contagion process, where individuals change behavior when the influence of their social neighborhood exceeds a certain threshold \cite{BRESCHI2023103651, VILLA2024106106}. Collectively, these frameworks demonstrate that social diffusion is a type of complex contagion, with the likelihood of adoption varying nonlinearly with population status and potentially impacted by a wide range of endogenous and exogenous factors \cite{centola2018behavior}.

At the same time, epidemic-inspired compartmental models provide another useful perspective. Originally developed to describe disease dynamics \cite{Kermack.McKendrick:1927}, these models quickly became a natural tool for studying diffusion more broadly \cite{goffman1964generalization, goffman1966mathematical, bettencourt2006power}. Classical formulations such as SIS and SIR capture how contagion propagates through interpersonal contact and have been adapted to model behavioral and technological adoption \cite{Pastor-Satorras2015}. Variants like the Susceptible-Infected-Vigilant (SIV) \cite{Xu2024} or frameworks with additional compartments \cite{nowzari2015general, nowzari2014stability, bhowmick2020influence}, as well as the Susceptible-Infected-Recovered-Susceptible (SIRS) model \cite{LI20141042, ZHANG2021126524} extend this perspective by allowing individuals to adopt temporarily and later abandon a behavior. These extensions capture mechanisms such as dissatisfaction, sensitivity to costs or shifts in social norms.

Adoption, however, is not only influenced by exposure to others but also by beliefs, attitudes, and perceived social norms. A more comprehensive framework for capturing these elements is offered by opinion dynamics models. The classical DeGroot model \cite{DeGroot1974} describes opinion convergence through averaging of neighbors’ views, while Altafini’s formulation \cite{Altafini2012} accounts for both cooperative and antagonistic interactions, allowing for persistent disagreement.  Individual predispositions are further incorporated into the Friedkin--Johnsen model \cite{Friedkin1990}, which strikes a balance between social influence and personal beliefs. These perspectives are particularly relevant for sustainable behavior adoption,  where heterogeneity is crucial to long-term results, resistance is common, and consensus is rarely universal. Therefore, recent research has concentrated on combining epidemic-like dynamics with opinion evolution \cite{granell2013dynamical, wang2019impact, lin2021discrete, she2021peak, She2022, arango2025opinion, bizyaeva2024active}, showing how beliefs and social influence can either boost or hinder the spread of innovations and behaviors.

This paper provides novel theoretical and numerical contributions to the understanding of how social influence, individual predispositions, and feedback from adoption interact to shape the large-scale diffusion of sustainable behaviors and innovations. Our contribution is four-fold. 

First, %building on the preliminary model introduced in \cite{cdc2025adoption}, 
we propose an \emph{adoption-opinion} model that couples a compartmental adoption process with opinion dynamics over a multilayer network. The propagation occurs in a closed population partitioned into communities, each representing a group characterized by socio-demographic aspects, e.g., age, education, geographic area, or a level of mobility. Mobility is particularly relevant in applications such as electric vehicle adoption: more mobile communities are more visible in the network and can influence adoption dynamics differently from less mobile ones.
Inspired by the SIV epidemic framework \cite{Xu2024}, we divide the population into three compartments: Susceptible, Adopter, and Dissatisfied (SAD). The first layer captures adoption dynamics through social contagion and perceived benefits, modulated by peer imitation and local opinions. Individuals adopt when they perceive sufficient value, determined by neighbors’ behavior and the local opinion climate, but may later abandon the behavior, entering the dissatisfied compartment. This formulation captures churn, a feature commonly observed in real-world adoption of green technologies. The second layer describes opinion evolution via a modified Friedkin--Johnsen model \cite{Friedkin1990}, where individuals balance intrinsic beliefs with influence from their social network and the observed adoption levels in the physical layer. This allows the inclusion of stubborn agents and heterogeneous predispositions, reflecting realistic patterns in sustainability contexts where beliefs are deeply personal and adoption is rarely universal. The resulting multilayer structure aligns with recent literature \cite{Paarporn2017, She2022, Wang2022}, capturing the bidirectional feedback between behavior and belief: adoption influences opinions, which in turn modulate the likelihood of adoption.

Second, we study two fundamental equilibria of the system: the adoption-free equilibrium, where no individuals are adopting, and the adoption-diffused equilibrium, where a positive fraction of the population is in the \emph{Adopter} compartment. 
{It is important to note that in both cases the equilibrium fraction of the opinions depends on the initial conditions. This feature arises from the structure of the opinion dynamics itself, which is a modification of the Friedkin--Johnsen model. We establish the existence and uniqueness of the adoption-diffused equilibrium and analyze the stability of both equilibrium points, deriving explicit conditions.}
%We perform a stability analysis for both equilibria, providing explicit conditions for the stability of the adoption-diffused equilibrium, which forms the basis for our optimal control design. 
This analysis ensures that the control problem is well-posed and targets the regime of practical interest, where adoption is sustained.

Third, we introduce an optimal control problem aimed at maximizing adoption over time. Unlike epidemic control strategies that suppress contagion, our interventions focus on \emph{shaping opinions} rather than directly enforcing adoption, in line with realistic policy tools such as awareness campaigns, media influence, and educational initiatives. We further extend the framework to incorporate two complementary control pathways: interventions that enhance the \emph{propensity to adopt}, reducing practical barriers such as cost or accessibility, and measures that improve \emph{user experience} and reduce dissatisfaction, thereby limiting churn from the \emph{Adopter} to \emph{Dissatisfied} compartment. The control problem is implemented through a nonlinear Model Predictive Control (MPC) scheme \cite{Rawlings2014}, which computes optimal interventions over a moving horizon while respecting feasibility and budget constraints. Importantly, we prove recursive feasibility of the MPC formulation and establish the stability of the closed-loop system, ensuring asymptotic convergence to the desired adoption-diffused equilibrium.

Fourth, we carry out a qualitative comparison of the alternative control strategies through numerical simulations. Results show that, without intervention, adoption typically stagnates, whereas our MPC-based control sustains and amplifies adoption across communities, even in the presence of heterogeneous opinions and dynamic dissatisfaction. Moreover, the simulations highlight the differential impact of the three levers (opinion shaping, propensity enhancement, and dissatisfaction reduction) offering concrete insights for the design of effective and targeted policy strategies. Some of these results appeared in preliminary form in \cite{cdc2025adoption}, the current version includes all proofs and additional discussions.

The rest of the paper is organized as follows. In Section \ref{sec:model} we introduce the adoption-opinion model. Stability results are provided in Section \ref{sec:stability}, while the optimal control problem we address is presented in Section \ref{sec:mpc}. Numerical simulations are shown in Section \ref{sec:simulations}, while in Section \ref{sec:conclusion}, we discuss future research lines.

\subsection{Notation}
We denote by $\R$ and $\R_{+}$ the sets of real and nonnegative real numbers. 
The all-1 vector and the all-0 vector are denoted by $\1$ and $\0$ respectively. The identity matrix and the all-0 matrix are denoted by $I$ and $\mathbb{O}$, respectively.
The transpose of a matrix $A$ is denoted by $A^T$. 
For $x$ in $\R^n$, let $||x||_1=\sum_i|x_i|$ and $||x||_{\infty}=\max_i|x_i|$ be its $l_1$- and $l_{\infty}$- norms, while $\mathrm{diag}(x)$ denotes the diagonal matrix whose diagonal coincides with $x$. 
For an irreducible matrix $A$ in $\R_+^{n\times n}$, we let $\rho(A)$ denote the spectral radius of $A$. 
Inequalities between two matrices $A$ and $B$ in $\mathbb{R}^{n \times m}$ are understood to hold entry-wise, i.e., $A \leq B$ means $A_{ij} \leq B_{ij}$ for all $i$ and $j$, while $A < B$ means $A_{ij} < B_{ij}$ for all $i$ and $j$.

\section{Model Description}\label{sec:model}
In this section, we introduce a novel adoption-opinion model. It % originally proposed in \cite{cdc2025adoption}, which 
describes the diffusion of sustainable behaviors in a population based on a two-layer network, as illustrated in Figure \ref{fig:layers}(a). We consider a closed population partitioned into communities $\mathcal{V} = \{1, \dots, n\}$, each representing a group of individuals with similar socio-demographic aspects, e.g., age, education, geographic area or a level of mobility. 
Within each community $i \in \mathcal{V}$, individuals are classified into three compartments, each represented by a fraction of the community:
\begin{itemize}
	\item $s_i(t)$: the fraction of susceptible individuals, i.e., those who have not yet adopted the virtuous behavior or service/technology at time $t\in\Z_+$ but may do so in the future through physical interactions with adopters in a physical network;
	\item $a_i(t)$: the fraction of adopters at time $t\in\Z_+$, i.e., individuals who have already adopted the behavior;
	\item $d_i(t)$: the fraction of dissatisfied individuals at time $t\in\Z_+$, including those who previously adopted but are now unsatisfied and temporarily abandon the behavior, or those unconvinced of its benefits and unwilling to adopt.
\end{itemize}

%Each community is endowed with an opinion $x_i(t) \in [0,1]$, representing the predisposition and the average perception of the benefits of adopting the behavior or innovation.
%
%Our model is based on a two-layer network, as illustrated in Figure \ref{fig:layers}(a). The first layer captures the evolution of three population fractions within each community, shaped by local opinions and social imitation through physical interactions. An individual’s likelihood of adopting a service or technology increases with the observed adoption in their own or neighboring communities, reflecting the collective perception of the service’s quality, reliability, and maturity. The second layer represents how opinions evolve over time, influenced both by interactions within a social network of communities and by the local adoption dynamics. Notably, the two layers may exhibit different topologies.
Moreover, each community is endowed with an opinion $x_i(t) \in [0,1]$, which represents the average predisposition and perception of the benefits deriving from the adoption of the innovation/service under observation.

In our two-layer model, the first layer describes the evolution of the three population classes within each community and takes into account the social imitation effect through physical interactions. Specifically, the probability that an individual will adopt a service or technology increases with the adoption observed in their own or neighboring communities, reflecting the collective perception of the quality, reliability, and maturity of the service. The second layer describes the evolution of opinions over time, which is influenced both by interactions within a community's social network and by local adoption dynamics. It is worth noting that the topologies of the two networks may be different.

\begin{figure}
	\hspace{-0.3cm}
	\subfloat[]{\includegraphics[scale=0.75]{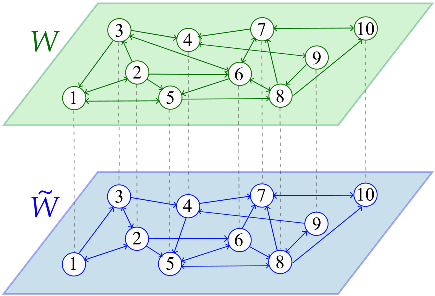}}
	\subfloat[]{\includegraphics[scale=0.4]{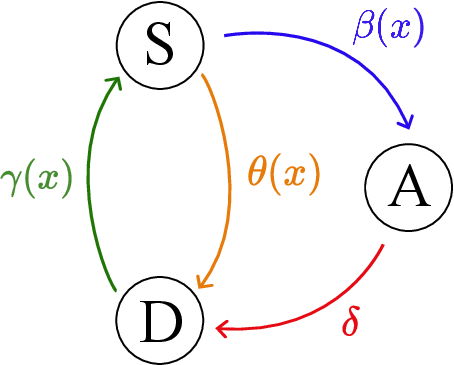}}
	\caption{(a) The bilayer network of the coupled adoption-opinion model. (b) Adoption model with three states and various transition parameters.}
	\label{fig:layers}
\end{figure}

\subsection{Adoption dynamics}
The adoption dynamics considered in this work build upon the epidemic-inspired framework proposed in \cite{Xu2024}. The central idea is that adoption spreads through pairwise interactions: when adopters interact with susceptible individuals, they may influence them to adopt. This mechanism naturally captures the role of social contagion, whereby exposure to adopters increases the likelihood of imitation and uptake, in line with well-established evidence from the diffusion of innovations and sustainable behaviors. Importantly, adoption is inherently reversible: after experiencing the adopted behavior, individuals may become dissatisfied and decide to abandon it. Dissatisfaction does not necessarily imply a permanent exit from the adoption process, as dissatisfied individuals can regain openness to adoption and return to the susceptible state at an endogenous rate. This reflects the realistic observation that negative experiences may temporarily hinder adoption but do not preclude renewed uptake once opinions or external conditions improve.

To capture these interactions at the community level, we model the adoption process over a physical interaction network, represented as a finite weighted directed graph $\mathcal{G} = (\mathcal{V}, \mathcal{E}, W)$. Here, as we mentioned above, $\mathcal{V}$ is the set of nodes representing communities, $\mathcal{E}$ the set of directed links, and $W \in \mathbb{R}_+^{n \times n}$ a nonnegative weight matrix, with $W_{ij} > 0$ if and only if there is a link $(i,j) \in \mathcal{E}$. The neighborhood of node $i$ is defined as $\mathcal{N}_i =\{ j\in \mathcal{V} : \,(i, j) \in \mathcal{E}\}$.
The discrete-time evolution of adoption in community $i$ is described by the following discrete-time equations:
\begin{align}\label{eq:adoption-model} 
	s_i(t+1) &=s_i(t) -\beta_i(x_i(t)) s_i(t)\displaystyle\sum_{j \in \mc N_i} W_{ij} a_j(t) \nonumber \\ 
	& \qquad+\gamma_i(x_i(t)) d_i(t) - \theta_i(x_i(t))s_i(t) \\[1pt] 
	a_i(t+1) &= a_i(t) + \beta_i(x_i(t))s_i(t)\!\! \displaystyle\sum_{j \in \mc N_i} W_{ij} a_j(t) - \delta_i a_i(t) \nonumber \\[1pt] 
	d_i(t+1) &= d_i(t) - \gamma_i(x_i(t))d_i(t) \!+\! \theta_i(x_i(t))s_i(t) + \delta_i a_i(t) \nonumber 
\end{align}
for all $i \in \V$. {The susceptibility to adoption is dependent on the opinion variable $x_i$ and hence is described by the function $\beta_i : [0,1] \rightarrow [0,1]$. Similarly, the likelihood that dissatisfied individuals return to the susceptible state is captured by $\gamma_i : [0,1] \rightarrow [0,1]$, and the tendency of adopters to become dissatisfied is represented by $\theta_i : [0,1] \rightarrow [0,1]$. In addition, $\delta_i \in (0,1]$ denotes the dismissing rate for community $i$. To highlight the interaction between opinions and adoption, we assume a simple linear dependence on the opinion variable $x_i$. Specifically, $\beta_i(x_i) = \beta_i x_i$, $\gamma_i(x_i) = \gamma_i x_i$, and $\theta_i(x_i) = \theta_i (1 - x_i)$, where $\beta_i$, $\gamma_i$, $\theta_i\in(0,1)$.} This choice implies that communities with more favorable opinions are both more inclined to adopt and more likely to recover from dissatisfaction, while negative opinions increase the risk of churn. Although we adopt linear functions for analytical tractability, the framework naturally accommodates more general specifications, e.g., any increasing functions of $x_i$ for $\beta_i$ and $\gamma_i$, and decreasing ones for $\theta_i$.

The resulting three-state adoption process is illustrated in Figure~\ref{fig:layers}(b), where arrows indicate the possible transitions between susceptible (S), adopter (A), and dissatisfied (D) compartments.

\subsection{Opinion dynamics}
Each community $i \in \V$ is associated with an opinion variable $x_i \in [0,1]$, which represents the average belief of the community toward the innovation or service. Opinions evolve over time as communities continuously compare their own views with those of others and with the observed adoption levels in their environment.

To formalize this, we describe social interactions through a directed graph $\tilde{\mc G}= (\mc V, \tilde{\mc E}, \tilde{W})$, where $\tilde{\mc E} \subseteq \mc V \times \mc V$ is the set of directed links and $\tilde{W} \in \R_+^{n \times n}$ is a nonnegative weight matrix, referred to as the social interaction matrix. The social neighborhood of a node $i \in \mc V$ is defined as $\tilde{\mathcal{N}}_i = \{j\in \mc V : ( i,j) \in \tilde{\mc E} \}$.

Building on the Friedkin--Johnsen framework \cite{Friedkin1990}, we extend the opinion update rule to
\begin{equation}\label{eq:opinion-model} x_i(t+1) =\alpha_i x_i(0) + \lambda_i  \mspace{-3mu} \sum_{j \in \tilde{\mc N}_i}\tilde{W}_{ij}x_j(t) + \xi_i  \mspace{-3mu} \displaystyle\sum_{j \in \mc N_i}W_{ij}a_j(t)\,,
\end{equation}
where $x_i(0)$ is the initial predisposition of community $i$, and the coefficients $\alpha_i,\, \lambda_i,\, \xi_i \,\, \geq 0$ satisfy $\alpha_i + \lambda_i + \xi_i = 1$ for all $i$. These parameters weigh three different sources of influence: intrinsic predisposition ($\alpha_i$), the social neighborhood ($\lambda_i$), and the observed adoption behavior in the physical layer ($\xi_i$). A larger $\lambda_i$ makes opinions more sensitive to neighbors’ beliefs, whereas higher $\xi_i$ emphasizes adoption-driven feedback.

\subsection{Relation to the literature}
The adoption dynamics proposed in this work explicitly build on the discrete-time networked epidemic SIV model of \cite{Xu2024}, with a reinterpretation of the compartments tailored to the diffusion of innovation. In what follows, we highlight the main motivations, leading to this modeling choice.  
\begin{itemize}
\item \emph{Capturing disaffection.}  Empirical evidence shows that users may adopt an innovation, experience dissatisfaction (e.g., due to costs, convenience or reliability), and subsequently suspend their usage before eventually reconsidering adoption. The SIV structure naturally accommodates this cycle (modeled here through the functions $\theta_i(\cdot)$ and $\gamma_i(\cdot)$), which is not adequately represented in classical SIS-type models (where no distinct state of dissatisfaction exists) or in SIR-type models (where the dissatisfied state is absorbing).
\item \emph{Opinion-behavior coupling.}  Following \cite{Xu2024}, we retain the mass-action network formulation, allowing some transition rates to depend on individual opinions. Here, the rate adoption $\beta_i(x_i)$ is supposed opinion dependent, while constant in \cite{Xu2024}, and moreover re-adoption after dissatisfaction and abandonment occur with rates $\gamma_i(x_i)$ and $\theta_i(1-x_i)$, respectively. This opinion-dependent parametrization provides a flexible mechanism to capture heterogeneous behavioral responses to evolving social influence.
\item \emph{Networked discrete-time formulation.}  The discrete-time network-based framework is consistent with data typically reported in aggregated time windows (e.g., daily, weekly or monthly adoption) and makes it possible to incorporate distinct topologies for physical interactions ($W$) and social influence ($\tilde W$), as also proposed in \cite{Xu2024}.
{\item \emph{Behavior-opinion coupling.} The novelty of this formulation lies in the explicit coupling of social influence and adoption dynamics. Unlike purely social models, which may predict rapid convergence of opinions, our framework captures persistent heterogeneity: communities update their beliefs not only based on what others think, but also on what they do. %This bidirectional link between opinion and adoption reflects more realistically the diffusion of sustainable innovations.
}
\end{itemize}

Our formulation extends the framework of \cite{Xu2024} by embedding opinion-dependent transition mechanisms and reinterpreting the SIV compartments to describe innovation diffusion.  
Moreover, while our compartmental dynamics are rooted in the SIV model, the opinion dynamics we introduce differ substantially from those in \cite{Xu2024}. The model is inspired by the Friedkin--Johnsen model, taking into account individual bias, allowing each agent to remain partially anchored to their initial opinion. However, it goes beyond the classic model by also introducing the influence of the level of adoption on the updates of opinions, integrating the idea that greater adoption reflects the perception of greater quality and maturity of the service offered.

%\section{Preliminary results}
\subsection{Coupled adoption-opinion model}
As mentioned above, the models \eqref{eq:adoption-model} and \eqref{eq:opinion-model} are coupled.
The next assumption guarantees that the physical network is connected and, regarding the social interaction network, ensures that every community is, directly or indirectly, influenced by at least one community with some level of stubbornness in its opinion.
\begin{assumption}\label{ass:ass1}
	Both $W$ and $\tilde{W}$ are row-stochastic. The matrix $W$ is irreducible. 
	Moreover, for any node $i\in\V$ {it holds that $\xi_i >0$ and} there exists a path in $\widetilde{\G}$ from $i$ to $j$ with $\lambda_j<1$ and $x_j(0)>0$.
	In addition, it holds that $\gamma_i + \theta_i \in (0,1)$ for all $i \in\mc V$.
\end{assumption}

In other words, regarding the social network, Assumption \ref{ass:ass1} guarantees that intrinsic opinions are propagated through the network, preventing communities’ beliefs from being determined solely by neighbors, which will be crucial for determining the system’s equilibrium behavior.

The next result ensures the well-posedness of the adoption-opinion model.% and its proof can be found in Appendix. %\ref{sec:proof-invariant}. 
\begin{proposition}\label{prop:invariant}
	Consider the adoption-opinion model \eqref{eq:adoption-model}--\eqref{eq:opinion-model} under Assumption \ref{ass:ass1}. Then, if $s(0), a(0), d(0)$ are in $[0,1]^{\V}$ and $s(0)+ a(0)+ d(0) = \1$, then $s(t), a(t),d(t)$ in $[0,1]^{\V}$ and $s(t)+a(t)+d(t)=\1$ for all $t\geq0$. Moreover, if $x(0)$ in $[0,1]^{\V}$, then $x(t)$ in $[0,1]^{\V}$ for all $t \geq 0$.
\end{proposition}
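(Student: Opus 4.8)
The plan is to argue by induction on $t \in \Z_+$, establishing simultaneously the simplex constraint ($s(t),a(t),d(t)\in[0,1]^\V$ with $s(t)+a(t)+d(t)=\1$) and the opinion constraint $x(t)\in[0,1]^\V$. The two must be handled together, because the one-step bounds on the adoption update \eqref{eq:adoption-model} rely on $x_i(t)\in[0,1]$ through $\beta_i(x_i),\gamma_i(x_i),\theta_i(x_i)$, while the bound on the opinion update \eqref{eq:opinion-model} relies on $a(t)\in[0,1]^\V$. The base case $t=0$ is the hypothesis, so only the inductive step requires work: assume all properties at time $t$ and derive them at $t+1$. Throughout I would use that the prescribed forms $\beta_i(x_i)=\beta_i x_i$, $\gamma_i(x_i)=\gamma_i x_i$, $\theta_i(x_i)=\theta_i(1-x_i)$ map $[0,1]$ into $[0,1]$, which forces the base parameters $\beta_i,\gamma_i,\theta_i$ to lie in $[0,1]$, together with $\delta_i\in[0,1]$.

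First I would establish conservation. Summing the three equations in \eqref{eq:adoption-model} for a fixed $i$, every flux cancels in pairs: the contagion term $\pm\beta_i(x_i)s_i\sum_j W_{ij}a_j$, the recovery term $\pm\gamma_i(x_i)d_i$, the churn term $\pm\theta_i(x_i)s_i$, and the dismissal term $\pm\delta_i a_i$ each appear with opposite signs. Hence $s_i(t+1)+a_i(t+1)+d_i(t+1)=s_i(t)+a_i(t)+d_i(t)=1$ by the inductive hypothesis. This telescoping reduces the simplex invariance to proving nonnegativity of each component, since nonnegativity together with the unit sum automatically gives the upper bound $\le 1$.

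The core of the argument is nonnegativity of the three adoption components, which I would obtain by rewriting each update as a manifestly nonnegative combination. For the adopter equation, grouping gives $a_i(t+1)=(1-\delta_i)a_i(t)+\beta_i(x_i)s_i(t)\sum_j W_{ij}a_j(t)\ge 0$ since $1-\delta_i\ge0$ and every factor is nonnegative. For the dissatisfied equation, $d_i(t+1)=(1-\gamma_i(x_i))d_i(t)+\theta_i(x_i)s_i(t)+\delta_i a_i(t)\ge0$, because $1-\gamma_i(x_i)=1-\gamma_i x_i\ge0$. The delicate case, and the main obstacle, is the susceptible equation, which I would write as
$$s_i(t+1)=\bigl(1-\beta_i(x_i)\textstyle\sum_j W_{ij}a_j(t)-\theta_i(x_i)\bigr)s_i(t)+\gamma_i(x_i)d_i(t).$$
The second summand is nonnegative, so it suffices to show the bracketed coefficient is nonnegative. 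For this I would use row-stochasticity of $W$ and $a_j(t)\le1$ to get $\sum_j W_{ij}a_j(t)\le1$, whence $\beta_i(x_i)\sum_j W_{ij}a_j(t)+\theta_i(x_i)\le \beta_i x_i+\theta_i(1-x_i)$; the right-hand side is a convex combination of $\beta_i$ and $\theta_i$ with weights $x_i$ and $1-x_i$, hence bounded by $\max\{\beta_i,\theta_i\}\le1$. This yields a nonnegative coefficient and closes the nonnegativity step.

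Finally, for the opinion variable I would bound \eqref{eq:opinion-model} directly. Nonnegativity of $x_i(t+1)$ is immediate, since $\alpha_i,\lambda_i,\xi_i\ge0$, the matrices $\tilde W,W$ are nonnegative, and $x(0),x(t),a(t)\ge\0$. For the upper bound, using $x_j(0),x_j(t),a_j(t)\le1$ together with the row-stochasticity $\sum_j \tilde W_{ij}=\sum_j W_{ij}=1$ gives $x_i(t+1)\le\alpha_i+\lambda_i+\xi_i=1$. This completes the inductive step and hence the proposition. I do not anticipate any difficulty beyond the convex-combination bound in the susceptible equation; the remaining estimates follow directly from the row-stochasticity and parameter ranges imposed in Assumption~\ref{ass:ass1} and by the model's sign conventions.
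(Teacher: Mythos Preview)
Your proof is correct and follows the same inductive strategy as the paper: conservation is obtained by summing the three equations in \eqref{eq:adoption-model}, and the opinion bound is obtained directly from row-stochasticity of $W,\tilde W$ together with $\alpha_i+\lambda_i+\xi_i=1$. Your version is in fact more complete than the paper's, which simply asserts $s(t),a(t),d(t)\in[0,1]^{\V}$ from the unit-sum identity without checking nonnegativity; you supply that missing step, including the convex-combination bound $\beta_i x_i+\theta_i(1-x_i)\le\max\{\beta_i,\theta_i\}\le1$ needed for $s_i(t+1)\ge0$.
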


\begin{proof}%[Proposition \ref{prop:invariant}]
We show the well-posedness of the coupled model \eqref{eq:adoption-model}--\eqref{eq:opinion-model} by induction. 
Suppose that at time $t$ we have $s(t)+ a(t)+ d(t) = \1$. From \eqref{eq:adoption-model}, we get
$$s_i(t+1)+ a_i(t+1)+ d_i(t+1) =s_i(t)+ a_i(t)+ d_i(t) = 1,$$
for all $i$ and for all $t \geq 0$. Then $s(t), a(t),d(t) \in [0,1]^{\V}$ for all $t\geq0$. 
Suppose now that at time $t$, $x(t) \in [0,1]$, then from \eqref{eq:opinion-model},
$$x(t+1) \leq (I-\Lambda-\Xi) x(0) + \Lambda \1 + \Xi \1 \leq \1\,,$$
and 
$x(t+1) \geq (I-\Lambda-\Xi) x(0) \geq \0\,,$
for all $t \geq 0$. Hence, $x(t+1) \in [0,1]^{\V}$.
\end{proof}\medskip

Due to Proposition \ref{prop:invariant}, the entire adoption process can be captured by tracking just two state variables along with opinions. From now on, we will consider the following dynamics in vectorial form:  
\begin{align}\label{eq:vector_model} 
	a(t+1) \!\!&=a(t) \!\!+\!\! B \mathrm{diag}(x(t)) \!\!\mathrm{diag}(\1\! -\! a(t)\!-\! d(t)) W a(t)\! -\! \Delta a(t),\nonumber \\
	d(t+1) \!\!&= d(t) - \Gamma \mathrm{diag}(x(t)) d(t)  + \Delta a(t)+\nonumber \\
	&\qquad+ \Theta (I - \mathrm{diag}(x(t))) (\1 -a(t)- d(t)), \\
	x(t+1) \!\!&= (I-\Lambda-\Xi) x(0) + \Lambda \tilde{W} x(t) + \Xi W a(t),\nonumber
\end{align}
where $\Delta :=  \mathrm{diag}(\delta)$, $B :=  \mathrm{diag}(\beta)$, $\Gamma :=  \mathrm{diag}(\gamma)$, $\Theta :=  \mathrm{diag}(\theta)$, $\Lambda:=\mathrm{diag}(\lambda)$, and $\Xi:=\mathrm{diag}(\xi)$.
\medskip

%We introduce the following assumption regarding the social interaction network, ensuring that every community is, directly or indirectly, influenced by at least one community with some level of stubbornness in its opinion.
%\begin{assumption}\label{ass:ass2}
%	For any node $i\in\V$, there exists a path in $\widetilde{\G}$ from $i$ to $j$ with $\lambda_j<1$ and $x_j(0)>0$.
%\end{assumption}
%
%In other words, Assumption \ref{ass:ass2} guarantees that intrinsic opinions are propagated through the network, preventing communities’ beliefs from being determined solely by neighbors, which is crucial for determining the system’s equilibrium behavior.
The next result will provide superior and inferior limits for the opinions vector.
\begin{proposition}\label{prop:limsupinf_x}
Consider the adoption-opinion model \eqref{eq:vector_model} under Assumption \ref{ass:ass1}. Denote 
\begin{gather}
x^*:=(I-\Lambda \tilde{W})^{-1}(I-\Lambda-\Xi)x(0), \label{eq:x^*}\\
x^{\diamond}:=(I-\Lambda \tilde{W})^{-1}\big((I-\Lambda-\Xi)x(0)+\Xi W\1 \big), \label{eq:x-bar}
\end{gather}
then
\begin{align}
\liminf_{t\to\infty}\ x(t)\geq {x}^*,\quad \limsup_{t\to\infty}\ x(t)\leq x^\diamond. \end{align}
\end{proposition}\smallskip
\begin{proof}
From Proposition \ref{prop:invariant}, if $a(0),d(0)$ are in $[0,1]^{\mathcal{V}}$ then for all $t\ge 0$ also $a(t),d(t)$ will be in $[0,1]^{\mathcal{V}}$.  
Moreover, the opinion dynamics is an affine system with input given by $\0\leq \Xi W a(t)\leq  \Xi W\1$ for all $t\ge 0$. 
%By monotonicity of such positive system we evince that if there is an equilibrium point, then this implies $x^\star \le x_e \le \bar x$.
We thus have
\begin{equation}\label{eq:lowerdyn}
x(t+1) \geq \Lambda \tilde{W} x(t) + (I-\Lambda-\Xi)x(0),
\end{equation}
and
\begin{equation}\label{eq:upperdyn}
x(t+1) \leq \Lambda \tilde{W} x(t) + (I-\Lambda-\Xi)x(0)+\Xi W \mathds{1}.
\end{equation}
We conclude 
$$
\liminf_{t\to\infty} x(t) \ge x^*, \quad \limsup_{t\to\infty}\ x(t)\leq {x}^{\diamond}.
$$
\end{proof}
\begin{remark}
It should be noticed that $x^*$ and $ x^{\diamond}$ represent opinions corresponding to the minimum and maximum level, respectively, that can be achieved naturally by the Friedkin--Johnsen model with a nonnegative input $\Xi W a(t)$.
\end{remark}

From Assumption \ref{ass:ass1}, communities with positive opinions influence the rest of the network and act as sources of positive opinions. This implies that $x^*_i>0$ for every node $i\in\mathcal{V}$.

\begin{remark}\label{rem:lim_x}
Proposition \ref{prop:limsupinf_x} can be strengthened by providing explicit, time-dependent bounds that hold for every $t \in \mathbb{N}$. In particular, it holds that
%we get that for any $\varepsilon>0$ there exists $k_0\in\mathbb{N}$ such that for all $k\geq k_0$ it holds $x(k)\leq x^{\diamond}+\epsilon$ and $x(k)\geq x^*-\epsilon$.
$$ x(t)\leq x^{\diamond} + \varepsilon_t, \quad x(t)\geq x^{*} -\varepsilon_t ,$$
where $\varepsilon_t := (\Lambda \tilde{W})^{t} \max\{\big|x(0) - x^{\diamond}\big|, \big|x(0) - x^*\big|\}.$
Since Assumption~\ref{ass:ass1} guarantees $\rho(\Lambda \tilde{W})<1$, it follows that $\varepsilon_t$ goes to $0$ exponentially as $t\to\infty$.
\end{remark}\smallskip

From now on we will denote $\underline{x}_{\varepsilon}:= x^*-\varepsilon$ and $\overline{x}_{\varepsilon}:= x^{\diamond}+\varepsilon$.\medskip

\section{Equilibrium and stability analysis}\label{sec:stability}
In this section, we study the equilibria of the system in \eqref{eq:vector_model}, analyze their stability, and examine the mutual influence between the adoption of innovations and the evolution of opinions. We focus on two equilibrium configurations: (i) the adoption-free equilibrium, where no individuals have adopted the innovation and all communities remain in the susceptible or dissatisfied compartments, and (ii) the adoption-diffused equilibrium, where there is a positive fraction of adopters, representing sustained adoption across communities. Studying these two configurations allows us to characterize the conditions under which adoption either fails to spread or becomes self-sustaining.

\subsection{Adoption-free equilibrium (AFE)}
The following proposition guarantees that there exists an adoption-free equilibrium. %, i.e. a stationary configuration with no adopters.
To this end, define~$\Psi(x) := \big( (\Gamma- \Theta)\mathrm{diag}(x) + \Theta\big)^{-1}\Theta(I-\mathrm{diag}\big(x\big)).$
\begin{proposition}[Adoption-free equilibrium]\label{prop:equilibria}
	Given Assumption \ref{ass:ass1}, 
	$\big(\0,\mspace{5mu} d^*, \mspace{5mu} x^* \big)$ is an equilibrium for \eqref{eq:vector_model}
	with $d^* := \Psi(x^*)\1$ and 
	$x^*=(I- \Lambda\tilde{W})^{-1} (I-\Lambda-\Xi) x(0)$, as defined in \eqref{eq:x^*}.
\end{proposition}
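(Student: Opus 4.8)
The plan is to verify directly that the triple $(\0, d^*, x^*)$ is a fixed point of the three coupled updates in \eqref{eq:vector_model}, handling the three coordinates in turn. First I would substitute $a=\0$ into the adopter update: because $W\0=\0$ and $\Delta\0=\0$, every term on the right-hand side vanishes, so the adopter coordinate stays at $\0$ \emph{regardless} of the values assigned to $d$ and $x$. This reduces the task to choosing $x^*$ and $d^*$ that make the opinion and dissatisfied updates stationary once $a$ is frozen at $\0$.

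For the opinion coordinate, setting $a=\0$ in the third line of \eqref{eq:vector_model} and imposing $x(t+1)=x(t)=x^*$ gives $(I-\Lambda\tilde W)x^*=(I-\Lambda-\Xi)x(0)$, from which the stated formula follows \emph{provided} $I-\Lambda\tilde W$ is invertible. Establishing this invertibility is the crux of the argument and the only place Assumption \ref{ass:ass2} is needed. I would show $\rho(\Lambda\tilde W)<1$: the matrix $\Lambda\tilde W$ is nonnegative with $i$-th row sum $\lambda_i=1-\alpha_i-\xi_i\le 1$ (since $\tilde W$ is row-stochastic), so $\rho(\Lambda\tilde W)\le 1$, and strictness comes from exhibiting, for every node, a directed path in the graph of $\Lambda\tilde W$ to a node whose row sum is strictly below $1$. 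Tracing the $\widetilde{\G}$-path of Assumption \ref{ass:ass2} from $i$ to a stubborn node $j$ with $\alpha_j>0$, either some intermediate node has $\lambda=0$ (a row sum of $0<1$) or the path survives in $\Lambda\tilde W$ all the way to $j$, where $\lambda_j=1-\alpha_j-\xi_j<1$; in either case node $i$ reaches a ``leaky'' row. The standard substochasticity argument then yields $\rho(\Lambda\tilde W)<1$, so $(I-\Lambda\tilde W)^{-1}=\sum_{k\ge0}(\Lambda\tilde W)^k$ exists and $x^*=(I-\Lambda\tilde W)^{-1}(I-\Lambda-\Xi)x(0)$ is well defined.

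For the dissatisfied coordinate, I would set $a=\0$ and impose $d(t+1)=d(t)=d^*$ in the second line of \eqref{eq:vector_model}. Cancelling the common $d^*$ term and collecting the rest, and using that $\Gamma,\Theta$ and $\mathrm{diag}(x^*)$ are all diagonal (hence commute), reduces the stationarity condition to $\big[(\Gamma-\Theta)\mathrm{diag}(x^*)+\Theta\big]d^*=\Theta\big(I-\mathrm{diag}(x^*)\big)\1$. The diagonal prefactor has $i$-th entry $\gamma_i x^*_i+\theta_i(1-x^*_i)$, which is positive under Assumption \ref{ass:ass1} together with $x^*\in[0,1]$ (a quick check separating interior and boundary values of $x^*_i$ using $\gamma_i+\theta_i>0$); inverting it recovers exactly $d^*=\Psi(x^*)\1$. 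Assembling the three verifications shows $(\0,d^*,x^*)$ is an equilibrium.

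The main obstacle, as noted, is the invertibility of $I-\Lambda\tilde W$. The routine pieces — the collapse of the $a$-coordinate and the algebraic rearrangement producing $d^*$ — are immediate, but the spectral-radius bound must be handled with care: one has to follow paths in the graph of $\Lambda\tilde W$ rather than of $\tilde W$, and account for intermediate nodes with $\lambda=0$, which is precisely why reaching a node with $\alpha_j>0$ (equivalently $\lambda_j<1$) suffices to produce the required strict leakage.
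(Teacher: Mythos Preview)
Your proof is correct. The paper does not supply a separate proof of this proposition; the only nontrivial ingredient, the invertibility of $I-\Lambda\tilde W$, appears later (inside the proof of Theorem~\ref{theo:free-equilibrium}), where it is dispatched by citing an external lemma that $\Lambda\tilde W$ is Schur stable under Assumption~\ref{ass:ass2}. Your direct verification of the three fixed-point equations is the natural route and matches what the paper implicitly relies on; the difference is that you give a self-contained substochasticity argument for $\rho(\Lambda\tilde W)<1$, carefully tracking paths in the graph of $\Lambda\tilde W$ and handling the case where an intermediate node along the Assumption~\ref{ass:ass2} path has $\lambda=0$, whereas the paper outsources this step. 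Both approaches yield the same conclusion; yours is more explicit and makes the precise role of Assumption~\ref{ass:ass2} transparent.
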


The proof is straightforward and is omitted for brevity.

{\begin{remark}\label{rem:rem1}%[Dependence on the Initial Condition and Impact of Uncertainty in the Knowledge of $x^*$]
	The equilibrium point of the opinion dynamics is explicitly given by \eqref{eq:x^*}.
	Hence, $x^*$ is a linear transformation of the initial condition $x(0)$. Analogously to the Friedkin--Johnsen model, the model exhibits a family of equilibria linearly parameterized by $x(0)$. Formally, the set of equilibria is
	$$\mathcal{X}^* := \{x^* =  (I - \Lambda \tilde W)^{-1}(I - \Lambda - \Xi) x(0)\, \, \mid\, \, x(0) \in [0,1]^n\},$$
	This structure highlights a persistent dependence of the steady state on the initial opinions, which are filtered and reshaped by the social influence matrix $W$ and the parameters $\Lambda$ and $\Xi$. From a sociological viewpoint, this means that societies with different initial attitudes may converge to distinct equilibria even under identical network and influence parameters.
	
	If the initial condition $x(0)$ is uncertain, but known to lie within an interval $[\underline x_0, \bar{x}_0]$, the corresponding equilibrium lies in
	\begin{equation}
		x^* \in (I - \Lambda \tilde W)^{-1}(I - \Lambda - \Xi)[\underline x_0, \bar{x}_0],
	\end{equation}
	that is, the image of the uncertainty set under the linear operator $(I - \Lambda W)^{-1}(I - \Lambda - \Xi)$. Therefore, uncertainty in the equilibrium can be equivalently interpreted as uncertainty in $x(0)$, through this known linear operator. This allows the derivation of stability conditions that are uniform over an interval of possible equilibria, providing robustness with respect to uncertainty in the initial opinions.
\end{remark}}\medskip

The following result characterizes the local stability properties of the adoption-free equilibrium. First, define 
\begin{equation}\label{eq:M}M(x) := I - \Delta +B \mathrm{diag}(x)\left(I - \Psi(x)\!\!\right)\!W\,.\end{equation}
\begin{proposition}[Local stability conditions of AFE]\label{theo:free-equilibrium}
	Consider the adoption-opinion model \eqref{eq:vector_model} under Assumption \ref{ass:ass1}. Then, the following hold:
	\begin{itemize}
			\item[(i)] If $\rho(M(x^*)) <1$, then the adoption-free equilibrium is locally stable.
			\item[(ii)] If $\rho(M(x^*)) >1$, then the adoption-free equilibrium is unstable.
%		\item[(iii)] If $\rho(M(\overline{x})) <1$, then the adoption-free equilibrium is globally asymptotically stable.
	\end{itemize} 
\end{proposition}
\begin{proof}
	Denoting the function $\Phi(\cdot) :=  \Gamma\mathrm{diag}(\cdot) + \Theta(I-\mathrm{diag}(\cdot))$, the Jacobian matrix of \eqref{eq:vector_model} at the adoption-free equilibrium will be 
	\begin{equation*}\label{J_1}
		\mspace{-3mu}	J :\!=\!\! \begin{bmatrix}
				\ds I- \Delta +B\mathrm{diag}(x^*)\mathrm{diag}(\1 \!-\! d^*)W  & \ds \mathbb{O} &\ds  \mathbb{O} \\ 
				\Delta -  \Theta (I-\mathrm{diag}(x^*)) & \ds I - \Phi(x^*) &- \Phi(d^*) \!\!\! \\
				\Xi W &\mathbb{O} &  \Lambda \tilde{W}
			\end{bmatrix}.
	\end{equation*}
	It should be noticed that the eigenvalues of $J$ are obtained as the union of the eigenvalues of the matrices on the diagonal. Matrix $I \!-\! \Phi(x^*) = I \!-\! \big( \Gamma\mathrm{diag}(x^*) - \Theta(I-\mathrm{diag}(x^*))\big)$ is diagonal and it has all entries less than $1$ from Assumption \ref{ass:ass1}, and thus $\rho(I \!-\! \Phi(x^*)) <1$. Matrix $\Lambda \tilde{W}$ is strictly sub-stochastic, and its largest eigenvalue is less than $1$. Therefore, the local stability of the adoption-free equilibrium depends only on the real part of the eigenvalues of the matrix $I- \Delta +B\mathrm{diag}(x^*)\mathrm{diag}(\1 - d^*)W$, which coincides with $M(x^*)$.
	By the linearization theorem, we conclude that that the adoption-free equilibrium is locally stable if $\rho(M(x^*)) < 1$, and unstable if $\rho(M(x^*)) > 1$.
	%Observe that $\mathrm{diag}(x^*) \geq \mathrm{diag}(\underline{x})$ and recalling again that $\Psi(x)$ is decreasing in $x$, we have that $d^* = \Psi(x^*)\1 \leq \Psi(\underline{x})$, then this implies
%	\begin{align*}
%		&\rho \left( I- \Delta +B\mathrm{diag}(x^*)\mathrm{diag}(\1 - d^*)W\right) \geq R^A_{0,\min}>1\,.
%	\end{align*}
%	Therefore, for the linearization theorem, the adoption-free equilibrium is unstable and this completes the proof.
\end{proof}\smallskip

Before stating global stability conditions, we first establish a technical lemma.
\begin{lemma}\label{lemma:lemma_ad}
Consider the adoption-opinion model \eqref{eq:vector_model} and define $y(t):=a(t)+d(t)$. Then 
$$\liminf_{t\rightarrow\infty} y(t)\geq \Psi({x}^{\diamond})\1.$$
\end{lemma}\smallskip
\begin{proof}From Proposition \ref{prop:limsupinf_x} and Remark \ref{rem:lim_x}, we get that $x(t)\leq\overline{x}_{\epsilon_t}=x^{\diamond}+\varepsilon_t$ with $\|\varepsilon_t\|=O(\rho(\Lambda \tilde{W})^{t})$ for all $t \in \mathbb{N}$.
Summing the first and second equations in \eqref{eq:vector_model}, and using $a(t)\geq0$, $d(t)\leq y(t)$, we have
\begin{align*}
y(t+1)=&y(t)+B\mathrm{diag}(x(t))\mathrm{diag}(\1-y(t))Wa(t)+\\
&-\Gamma \mathrm{diag}(x(t))d(t)+\Theta(I-\mathrm{diag}(x(t)))(\1-y(t))\\
\geq& y(t)-\Gamma \mathrm{diag}(\overline{x}_{\epsilon_k})y(t)+\Theta(I-\mathrm{diag}(\overline{x}_{\epsilon_k}))(\1-y(t)).
\end{align*}
Then, considering the inferior limit of both sides we can conclude that 
\begin{align*}
	\0\geq &-[\Gamma \mathrm{diag}({x}^{\diamond})+\Theta(I-\mathrm{diag}({x}^{\diamond}))] \liminf_{t\rightarrow\infty} y(t)\\
	& + \Theta(I-\mathrm{diag}({x}^{\diamond}))\1.
\end{align*}
Rearranging the terms, we obtain the result.
\end{proof}\medskip

Building on the previous results, we can now provide a condition for the global stability for the adoption-free equilibrium. The following theorem is one of the main results of this paper.
\begin{theorem}[Global stability conditions of AFE]\label{theo:free-equilibrium2}
Consider the adoption-opinion model \eqref{eq:vector_model} under Assumption \ref{ass:ass1}. If $\rho(M({x}^{\diamond}))<1$, then the adoption-free equilibrium is globally asymptotically stable.
\end{theorem}
\begin{proof}
From Proposition \ref{prop:limsupinf_x} we have that for all $\varepsilon>0$ there exists $\underline{t}\in\mathbb{N}$ such that if $t\geq \underline{t}$ then $x(t)\leq \overline{x}_{\varepsilon}=x^{\diamond}+\varepsilon$. By definition of the matrix $M$ in \eqref{eq:M}, we can rewrite the adopter dynamics by adding and subtracting the term $\Psi(\overline{x}_{\epsilon})$ as
	$$a(t+1)\leq M(\overline{x}_{\epsilon})+B\mathrm{diag}(\overline{x}_{\epsilon})\mathrm{diag}(\Psi(\overline{x}_{\epsilon})\1-a(t)+d(t))a(t),$$
	for all $t\geq \underline{t}$.
	From Lemma \ref{lemma:lemma_ad}, for any $\epsilon > 0$ there exists $T > 0$ such that for all $t \ge T$,
	$$a(t)+d(t)>\Psi(x^{\diamond})\1-\epsilon.$$
	Since $\rho(M(x^{\diamond})) < 1$, $\epsilon$ and $\varepsilon$ can be chosen small enough so that $0 < \epsilon < 1 - \rho(M(x^{\diamond}))$, which ensures that for all $t \ge \max\{\underline{t},T\}$,
	\begin{equation}\label{eq:eq1}a(t+1)\leq (M(x^{\diamond})+\epsilon I)a(t).\end{equation}
	By the Perron–Frobenius theorem \cite{Horn1991}, there exist a nonnegative vector $v\neq 0$ and a constant $\lambda\in(0,1)$ such that $v^\top (M(x^{\diamond})+\epsilon ) \le \lambda\, v^\top$. Multiplying both sides of \eqref{eq:eq1} by $v^\top$, we obtain for all $t \ge T$,
	$$v^\top a(t+1)\ \le\ v^\top  (M(x^{\diamond})+\epsilon I) a(t)\ \le\ \lambda\, v^\top a(t),$$
	which implies that $v^\top a(t)$ goes to $0$ exponentially fast and then $a(t)$ converges to $\0$ for any initial state $a(0)$ in $[0,1]^{\mc V}$.
%	Consider now the Lyapunov function defined as $V(t) :=  \bar a(t)^T P \bar a(t)$, that is equal to $0$ at the equilibrium point. Then, we obtain 
%{\small\begin{align*}
%		&V(t+1) - V(t) = \bar a(t+1)^TP \bar a(t+1)- \bar a(t)^TP \bar a(t) \\
%		& = \bar a(t)^T\!\! (M(x(t)) - l(t))^T \!\! P  (M(x(t)) - l(t)) \bar a(t)- \bar a(t)^T \!\! P \bar a(t)\\
%		&\leq 0 
%	\end{align*}}
%since $(M(x(t)) -l(t))^TP (M(x(t))- \Phi(t))-P$ is a negative semi-definite matrix. Then $\bar a(t)$ asymptotically converges to $\0$ for any initial state $\bar a(0)$ in $[0,1]^{\V}$. Note now that 
%$$M(x(t)) - l(t) \leq I - \Delta +B \mathrm{diag}(x)\mathrm{diag}(\1 -a(t)- d(t))\!W$$
%and then $0 \leq a(t) \leq \bar{a}(t)$. Then also $a(t)$ asymptotically converges to $\0$ for any initial state $a(0)$ in $[0,1]^{\V}$.
%%We know that $x(t)$ asymptotically converges to $ (I- b\tilde{W})^{-1} a x(0)$ from Remark \ref{remark1}. 

Note that from Assumption \ref{ass:ass1} it follows that $\Lambda\tilde{W}$ is a Schur stable matrix (see Lemma 5 in \cite{FRASCA2013212}) and the opinions vector $x$ asymptotically converges to $x^*$, since it is the unique asymptotically stable equilibrium of the resulting Friedkin--Johnsen model.

It remains to show that $d(t)$ asymptotically converges to $d^*$. 
For the sake of conciseness, we adopt the following notation in the subsequent analysis:
$\theta_i(t) = \theta_i (1-x_i(t))$, $\gamma_i(t)= \gamma_i x_i(t)$.
Let us now define the error 
$$e_i(t) :=d_i(t) - \frac{\delta_i a_i(t) + \theta_i (t)(1-a_i(t))}{\theta_i (t) + \gamma_i(t)}\,,$$
for all $i \in \V$. Then, we get
$ e (t+ 1) =U(t)e (t) + b(t)\,$
with $U(t):= I- \Gamma \mathrm{diag}(x(t)) - \Theta(I - \mathrm{diag}(x(t)))$ and 
\begin{align*}
	b_i (t)& :=\! \left[ \frac{\delta_i \!-\! \theta_i (t)}{\theta_i \!(t) \!+\! \gamma_i(t)} \!+\! \frac{\theta_i (t+1)\!-\! \delta_i) (1 \!-\! \delta_i)}{\theta_i (t+1) \!+\! \gamma_i(t+ 1)} \right]\!\! a_i(t) \\[3pt]
	&+ \frac{(\theta_i (t+1) \!-\! \delta_i) (1 \!-\! a_i(t) - d_i(t))}{\theta_i (t+1) \!+\! \gamma_i(t+ 1)}\beta_i x_i(t)\!\! \sum_{j \in \mc N_i} \! W_{ij} a_j(t) \\
	&+ \frac{\theta_i(t)}{\theta_i(t) + \gamma_i(t)} - \frac{\theta_i (t+1)}{\theta_i (t+1) + \gamma_i(t+ 1)}\,.
\end{align*}
Define
\begin{equation}\label{eq:eta} \eta\,  := \, 1-\min_i (\gamma_ix_i+\theta_i(1-x_i))\,.\end{equation}
Then, from Assumption \ref{ass:ass1} we have that $\|U(t)\|\leq \eta <1$. Since $a(t)$ asymptotically converges to $\0$ and $x(t)$ is also convergent to $x^*$, it follows that $\|b(t)\|$ vanishes as $t \to \infty$.
We thus have
$$\|e(t+1) \|\leq \eta^{t+1}\|e(0)\|+\sum_{s=0}^t\eta^{t-s-1}\|b(s)\|,
$$
which converges to zero (the second term is a discrete convolution with an exponentially decaying sequence $\|b(s)\|$ tending to zero). Thus proving the result. 
\end{proof}\medskip

\begin{remark}
In epidemic modeling, the reproduction number provides the rate of secondary infections expected from a single infected individual. In our context, the reproduction number therefore represents the threshold beyond which a new behavior, such as the adoption of a new technology or service, spreads widely or tends to zero over time.

For the opinion-adoption model introduced in \eqref{eq:vector_model}, we can analogously define an opinion-dependent reproduction number as 
$$R_0^A(t) := \rho(M(x(t)).$$
It is worth noting that the reproduction number depends on the opinions of the various communities. Moreover, the spectral radius $\rho(M(x))$ is an increasing function in $x$: values of opinions vector $x$ close to $\1$ raise the threshold, while those close to $\0$ lead to a smaller reproduction number, indicating a reduced potential for adoption diffusion.
Following the line of analysis developed in \cite{Xu2024}, we can in principle consider a scenario in which all communities strongly support the innovation.
Given an upper bound on opinions $\hat x$, the corresponding maximum reproduction number $R_{0, \max}^A := \rho(M(\hat{x}))$ provides a conservative threshold for determining whether adoption can take off (indicating the conditions under which the no-adoption equilibrium becomes unstable and widespread adoption can occur).
Conversely, a minimum reproduction number can be defined by considering the worst-case scenario, in which all communities have the lowest possible opinion levels $\underline{x}$, such as $R_{0, \min}^A :=\rho( M(\underline{x}))$.
\end{remark}\medskip

The following proposition provides more interpretable sufficient conditions for stability analysis.
\begin{proposition}\label{prop:cond-suff-afe}
	Consider the adoption-opinion model~\eqref{eq:vector_model} under Assumption~\ref{ass:ass1}.
	Then, the following hold:
	\begin{enumerate}
		\item[(i)] If for all $i\in\mathcal{V}$
		\begin{equation}\label{eq:cond-x^*} \beta_i\,\frac{\gamma_i\, x_i^*}{\gamma_i x_i^* + \theta_i(1-x_i^*)}
		\;>\;\delta_i,
		\end{equation}
		then the adoption-free equilibrium is unstable.
		\item[(ii)] If for all $i\in\mathcal{V}$
		\begin{equation}\label{eq:cond-barx}
\beta_i \frac{\gamma_ix^\diamond_i}{\gamma_i x^\diamond_i + \theta_i(1-x^\diamond_i)}< \delta_i,
\end{equation}		then the adoption-free equilibrium is globally asymptotically stable.
	\end{enumerate}
\end{proposition}
\begin{proof}
	(i) Let $M(x)$ be the matrix defined in~\eqref{eq:M}. By the Collatz--Wielandt formula for the spectral radius \cite{Collatz1942}, for any $v>\0$ we have
	$$\min_i \frac{(M(x)v)_i}{v_i} \;\le\; \rho(M(x)) \;\le\; \max_i \frac{(M(x)v)_i}{v_i}.$$
	Choosing $v = \1$, we get the following bounds:
	\begin{equation}\label{eq:collatz} \min_i \sum_j M_{ij}(x) \;\le\; \rho(M(x)) \;\le\; \max_i \sum_j M_{ij}(x).\end{equation}
	Evaluating the left-hand side of \eqref{eq:collatz} at $x = x^*$, we have
 	$$\sum_j M_{ij}(x^*) = 1 - \delta_i + \beta_i \frac{\gamma_i x_i^*}{\gamma_i x_i^* + \theta_i(1-x_i^*)}>1, \quad \forall i,$$
 	which follows from \eqref{eq:cond-x^*}. Thus $\rho(M(x^*)) > 1$ and from Proposition \ref{theo:free-equilibrium}(ii), the adoption-free equilibrium is unstable.\\
	(ii) Evaluating the right-hand side of \eqref{eq:collatz} at $x = x^\diamond$, we have
	$$\sum_j M_{ij}(x^\diamond) = 1 - \delta_i + \beta_i \frac{\gamma_ix^\diamond_i}{\gamma_i x^\diamond_i + \theta_i(1-x^\diamond_i)}<1, \quad \forall i,$$
	which follows from \eqref{eq:cond-barx}. Thus $\rho(M(\bar x)) < 1$ and by Theorem~\ref{theo:free-equilibrium2}, this implies that the adoption-free equilibrium is globally asymptotically stable.
\end{proof}\medskip
\begin{remark}
	Conditions \eqref{eq:cond-x^*} and \eqref{eq:cond-barx} highlight the interplay between the adoption rate $\beta_i$, opinions $x_i$, and the dissatisfaction rate $\delta_i$ in determining the stability of the adoption-free equilibrium. Specifically, in \eqref{eq:cond-x^*}, if $\beta_i$ is sufficiently large at the equilibrium $x_i^*$, the effective adoption rate exceeds $\delta_i$, rendering the equilibrium unstable. Conversely, in \eqref{eq:cond-barx}, if $\beta_i$ is too small relative to $\delta_i$ even at the maximal opinion $\overline{x}_i$, adoption cannot propagate, yielding a conservative sufficient condition for the global stability of the equilibrium.
\end{remark}
{\begin{remark}\label{rem:rem2}%[Impact of Uncertainty in the Initial Opinions on Existence and Stability]
		Continuing the discussion of Remark~\ref{rem:rem1}, we can analyze how uncertainty in the equilibrium opinions affects the stability of the AFE.
		The stability condition, stated in Proposition \ref{prop:cond-suff-afe}, depends monotonically on each component $x^*_i$ through the increasing function
		$$f_i(x^*_i) := \frac{\beta_i \gamma_i x^*_i}{\gamma_i x^*_i + \theta_i (1 - x^*_i)}.$$
		Consequently, any uncertainty in the equilibrium point $x^*$ can be directly propagated to the left-hand side of the instability condition, yielding
		$$f_i(x^*_i) \in [\,f_i(x^{*L}_i),\, f_i(x^{*U}_i)\,], 
		\qquad x^{*L}_i, x^{*U}_i \in [A\,\underline{x}_0,\, A\,\bar{x}_0],$$
		where $A := (I - \Lambda \tilde W)^{-1}(I - \Lambda - \Xi)$ denotes the linear operator mapping the initial opinions to the equilibrium.
		Accordingly:
		\begin{itemize}
			\item If $f_i(x^{*L}_i) > \delta_i$, the adoption-free equilibrium is unstable.
			\item If $f_i(x^{*U}_i) < \delta_i$, the equilibrium is  stable.
		\end{itemize}
		This result shows that the stability of the AFE depends monotonically on the initial opinions through $x^*$.  
		In particular, small variations in the initial predispositions $x(0)$ may shift the system across the instability threshold, potentially triggering large-scale adoption dynamics.
	\end{remark}
}

\subsection{Adoption-diffused equilibrium}
Since the stability of the adoption-free equilibrium has been established, the natural next step is to investigate {the existence and uniqueness of an adoption-diffused equilibrium, that is, a steady state with a strictly positive adoption level across the network. This equilibrium captures scenarios in which the innovation successfully spreads and persists within the population, driven by the interplay between social influence, adoption dynamics, and opinion states. %Understanding the conditions under which such equilibria exist, as well as their stability properties, is essential in order to determine whether the innovation will persist in the long run or eventually die out after a transient phase.
The following result characterizes conditions for the existence, uniqueness, and stability of the adoption-diffused equilibrium.}%The proof, closely follows some of the arguments developed in Proposition 3 and Theorem 2 of \cite{Xu2024}, suitably adapted to our opinion-dependent framework.

Before stating the next result, we introduce some useful definitions. First, define
\begin{gather}
	\nu := \sup_{x \in[ x^*, x^\diamond]}\| \Delta - \Theta(I -\mathrm{diag}(x))\|_\infty, \label{eq:nu}  \\
	c :=\sup_{x \in [x^*, x^\diamond]} \| I-\Theta(I-\diag(x)) - \Gamma \diag(x)\|_\infty. \label{eq:c} 
\end{gather}
Moreover, for a given equilibrium $(a^\dag, d^\dag, x^\dag)$, let
\begin{equation}\label{eq:varphi}
%	\mspace{-5mu}\varphi :=\mspace{-15mu}\sup_{x \in [x^*, x^\diamond]} \mspace{-10mu} \left\| I \!-\! \Delta \!-\! \mc B^\dag \!\!\!+\! B\mathrm{diag}(x(t))\mathrm{diag}(\1-a(t)-d(t)) W \right\|_{\infty}
\mspace{-5mu}\varphi :=\mspace{-15mu}\sup_{x \in [x^*, x^\diamond]} \mspace{-10mu} \left\| I \!-\! \Delta \!-\! \mc B^\dag \!\!\!+\! B\mathrm{diag}(x)\mathrm{diag}(\1-\Psi(x^\diamond)\1) W \right\|_{\infty},
\end{equation}
where $\mc B^\dag := B \mathrm{diag}\big( \mathrm{diag}(x) W a^\dag \big)$. Finally, we introduce the matrix
\begin{equation}\label{eq:G} G:= \begin{bmatrix}
	\varphi & \ds \sqrt{\sup_{x \in [x^*, x^\diamond]} \| \mc B^\dag\|_\infty \nu}\\
	\ds \sqrt{\sup_{x \in [x^*, x^\diamond]} \| \mc B^\dag \|_\infty \nu} & c
\end{bmatrix}.\end{equation}
\begin{theorem}\label{theo:diffused-equilibrium}
	Consider the adoption-opinion model \eqref{eq:vector_model} under Assumption \ref{ass:ass1}. If $\rho(M(x^*)) >1$, then the following hold:
	\begin{enumerate}
		\item[(i)] There exists a unique adoption-diffused equilibrium $(a^\dag, d^\dag, x^\dag)$ for a nonnegative $a^\dag\neq \0$. 
		\item[(ii)] If it also holds that
		%\begin{equation}\label{eq:hyp1}\beta_i \sum_{j \in \mc N_i} W_{ij} \leq \delta_i + \mc B^\dag_{jj} \quad \forall i \in \mc V,\end{equation}% \leq 2 - \sum_{j \in \mc N_i} W_{ij} + 2A_{jj}\end{equation}
		%and 
		$\rho(G) <1$,
%		there exist $\varsigma_1, \varsigma_2 >0$ such that 
%		\begin{align}
%			\nu^2 + \frac{\varsigma_2 \nu^2}{1 - \eta^2} + \frac{\varsigma_1^2 \varphi^2}{(1 - \varphi^2)^2} &< \varsigma_1 \label{eq:hyp3} \\
%			\rho^2(\mc B^*) + \frac{\varsigma_1 \rho^2(\mc B^*)}{1 - \varphi^2} + \frac{\varsigma_2^2 \eta^2}{(1 \!-\! \eta^2)^2}&< \varsigma_2
%			 \label{eq:hyp4}
%		\end{align}
		then the adoption-diffused equilibrium $(a^\dag, d^\dag, x^\dag)$ is asymptotically stable for all initial conditions $(a(0), d(0), x(0))\neq(\0, d^*, x^*)$.
	\end{enumerate}
\end{theorem}
\begin{proof}
(i) By hypothesis, let $q := \rho(M(x^*)) - 1 > 0$. Since the map $M \mapsto \rho(M)$ is continuous with respect to $M$ \cite{Horn1991}, there exists $\mu > 0$ such that
	if $\|x - x^*\|_\infty < \mu$ then 
	$$|\rho(M(x)) - \rho(M(x^*))| < \frac{q}{2}.$$
	Now, choose any vector $\varepsilon \ge 0$ such that $0 < \|\varepsilon\|_\infty < \mu$ and $x^* - \varepsilon \in [0,1]^n$. 
	By continuity of the spectral radius, it follows that
	$$\rho(M(x^* - \varepsilon)) > \rho(M(x^*)) + \frac{q}{2} = 1 + \frac{q}{2} > 1.$$
%	On the other hand, from the third equation in the dynamics in \eqref{eq:vector_model}, we have
%	$$x(t+1) \ge \Lambda \tilde{W} x(t) + (I - \Lambda - \Xi)x(0),$$
%	which implies
%	$$\liminf_{t \to \infty} \, x(t) \ge x^*.$$
	From Proposition \ref{prop:limsupinf_x}, there exists a time $T > 0$ such that $x(t) \ge \underline{x}= x^* - \varepsilon$ for all $t \ge T$.
	%Define $\underline{x} := x^* - \varepsilon$. 
	Then, for all $t \ge T$, the spectral radius of the matrix evaluated at $\underline{x}$ satisfies
	\begin{equation}\label{eq:rho_x_1} \rho(M(\underline{x})) = \rho\Big(I - \Delta + B \, \mathrm{diag}(\underline{x}) \, (I - \Psi(\underline{x})) W \Big) > 1. \end{equation}
From \cite{Pare2020}, it follows that \eqref{eq:rho_x_1} holds true if and only if $s(M(\underline{x})) > 0$, where $s(\cdot)$ denotes the largest real part among the eigenvalues of a real square matrix. We define $\lambda := s(M(\underline{x}))$. Since $\Delta$, $B$ and $\Psi(\underline{x})$ are diagonal and $W$ is nonnegative and irreducible, $ - \Delta + B \mathrm{diag}(\underline{x})\big(I - \Psi(\underline{x})\big)W$ has an associated right eigenvector $\mu > 0$ such that
\begin{equation}\label{eq:eig} (- \Delta + B \mathrm{diag}(\underline{x})\big(I - \Psi(\underline{x})\big)W)\mu = \lambda \mu,\end{equation}
from the Perron–Frobenius theorem for irreducible Metzler matrices \cite{Varga2009}. 

For the sake of conciseness, we adopt again the following notation in the subsequent analysis:
$\theta_i(t) = \theta_i (1-x_i(t))$, $\gamma_i(t)= \gamma_i x_i(t)$.
Consider the following system:
 \begin{align}
		\displaystyle a_i(t+1) =&\! a_i(t) + \beta_i x_i(t)(1-a_i(t)-d_i(t)) \!\!\displaystyle \sum_{j \in \mc N_i} \! W_{ij} a_j(t)-\! \delta_i a_i(t), \nonumber \\
		\displaystyle d_i(t) &=  \frac{\delta_i a_i(t) + \theta_i (t)(1-a_i(t))}{\theta_i (t) + \gamma_i(t)} , \label{eq:model2} \\ 
		\displaystyle x_i(t+1) =&(1-\lambda_i-\xi_i)x_i(0) + \lambda_i   \sum_{j \in \tilde{\mc N}_i}\tilde{W}_{ij}x_j(t) +  \xi_i \sum_{j \in \mc N_i}W_{ij}a_j(t)\,. \nonumber
\end{align}
Considering the equilibrium of $d_i(t)$, it follows that every adoption-diffused equilibrium of the current system is also an adoption-diffused equilibrium of system \eqref{eq:vector_model}.
Define $\Omega:= [0,1]^{3\mc V} \setminus \{ (\0, d,x): d,x \in [0,1]^\mc V\}$.
Now let us consider the convex and compact subset of $\Omega$ given by
\begin{equation}\label{eq:invariant_set} \Omega_\varepsilon = \{ (a, d, x) \mid\, \, a_i \in [\varepsilon \mu_i, 1],\,\, d_i ,\,\, x_i \in [0, 1],\,\, \forall i \in \mc V\},\end{equation}
where $\varepsilon \in (0,1)$ is a constant. Assuming $(a(t), d (t), x(t)) \in \Omega$, there must exist a sufficiently small $\varepsilon_1 > 0$, satisfying $a_i(t) = \varepsilon_1 \mu_i$, and $a_j(t) \in [\varepsilon_1 \mu_j, 1]$ for all $j \neq i$. Then we have $(a(t), d(t), x(t)) \in \Omega_{\varepsilon_1}$, and from \eqref{eq:model2}, it follows that
{\small \begin{align*}
		a_i ( t + 1)  &
		%		\varepsilon_1 \mu_j \!-\! \delta_i \varepsilon_1 \mu_j  + \\
		%		&+\! \beta_i x_i(t)\left(1- \varepsilon_1 \mu_j -  \frac{\delta_i \varepsilon_1 \mu_j + \theta_i (t)(1-\varepsilon_1 \mu_j)}{\theta_i (t) + \gamma_i(t)} \right) \!\!\displaystyle \sum_{j \in \mc N_i} \! W_{ij} a_j(t) \\
		\geq \varepsilon_1 \mu_i \!-\! \delta_i \varepsilon_1 \mu_i  + \\
		&+\! \beta_i x_i(t)\Big(\!\!1- \varepsilon_1 \mu_i -  \frac{\delta_i \varepsilon_1 \mu_i + \theta_i (t)(1-\varepsilon_1 \mu_i)}{\theta_i (t) + \gamma_i(t)}\!\!\! \Big) \mspace{-3mu}\displaystyle \sum_{j \in \mc N_i} \! W_{ij} \varepsilon_1 \mu_j \\
		&\geq \varepsilon_1 \!\!\Big(\mspace{-5mu} \Big( \!\!1 -  \frac{\theta_i (t)}{\theta_i (t) + \gamma_i(t)}\!\! \Big) \beta_i x_i(t) \!\!\!\! \sum_{j \in \mc N_i} \! W_{ij}  \mu_j - \delta_i \mu_i\Big) + \varepsilon_1 \mu_i \\ 
		& - \varepsilon_1^2 \mu_i \frac{\gamma_i(t) + \delta_i}{\theta_i (t) + \gamma_i(t)} \beta_i x_i(t)\displaystyle \sum_{j \in \mc N_i} \! W_{ij}  \mu_j. 
\end{align*}}
Substituting \eqref{eq:eig}, we get that 
$$\mspace{-10mu}
a_i (t + 1) \geq \varepsilon_1 \lambda \mu_i+ \varepsilon_1 \mu_i  -  \varepsilon_1^2 \mu_i \frac{\gamma_i(t) + \delta_i}{\theta_i (t) + \gamma_i(t)} \beta_i x_i(t)\displaystyle \sum_{j \in \mc N_i} \! W_{ij}  \mu_j .$$
Note that there must exist a sufficiently small $\varepsilon_2 > 0$ such that
$$\varepsilon_2 \lambda \mu_i-  \varepsilon_2^2 \mu_i \frac{\gamma_i(t) + \delta_i}{\theta_i (t) + \gamma_i(t)} \beta_i x_i(t)\displaystyle \sum_{j \in \mc N_i} \! W_{ij}  \mu_j  \geq 0,$$
and then
$$a_i(t+1) \geq \varepsilon_2 \mu_i .$$
Then, we obtain that $\forall \bar{\varepsilon} \in (0, \min\{\varepsilon_1, \varepsilon_2\})$, $\Omega_{\bar{\varepsilon}}$ is a positive invariant set for system \eqref{eq:model2}. Therefore, by Brouwer’s fixed-point theorem \cite{khamsi2011introduction}, the system \eqref{eq:model2} and hence the original system \eqref{eq:vector_model} have at least one equilibrium in $\Omega_{\bar{\varepsilon}}$.
	
Building on the scaling argument in \cite{akian2016uniqueness}, we now prove the uniqueness of the equilibrium.
In the following we consider $d$ constrained on the space of points satisfying the fixed-point equation, i.e.
\begin{equation}\label{eq:fixed_points_equation}
d_i=\mathsf{d}_i(a;x)=\frac{\delta_ia_i+\theta_i(1-x_i)(1-a_i)}{\theta(1-x_i)+\gamma_i x_i}.
\end{equation}
In fact, every equilibrium of the original model \eqref{eq:vector_model} necessarily will satisfy \eqref{eq:fixed_points_equation}.
From Assumption \ref{ass:ass1}, opinion dynamics in \eqref{eq:model2} and the positive invariance of $\Omega_{\bar{\varepsilon}}$ proven above, it follows that  
\begin{equation} \label{eq:xi-omega}x(t+1) \geq \Xi W a(t) > \0, \quad  \forall \ (a(t), d(t),x(t)) \in \Omega_{\bar{\varepsilon}}. \end{equation}

Substituting the second equation of \eqref{eq:model2} into the first equation at the equilibrium, we denote the operator $T: [\bar \varepsilon \mu_i, 1]^n \to [\bar \varepsilon \mu_i, 1]^n$
\begin{equation}\label{eq:Ti}
	T_i(a) := \frac{\beta_i}{\delta_i} \, x_i \,\left(1-a_i-\frac{\delta_i a_i + \theta_i (1-x_i)(1-a_i)}{\theta_i (1-x_i) + \gamma_i x_i}\right) \sum_{\ell \in \mc N_i} W_{i \ell} a_\ell.
\end{equation}
	Thus, proving uniqueness of the equilibrium reduces to showing that the operator $T$ admits a unique fixed point. 
It should be noticed that, for all $j\neq i$, 
$${\partial\,T_i(a)}/{\partial a_j}\geq 0.$$ 
%Moreover, since $\Omega_{\varepsilon}$ is positively invariant, $a\ge \bar\varepsilon\mu> 0$ $W$ is irreducible, then $W a> 0$. 
We conclude that $T$ is strictly monotone: %over $[\bar \varepsilon \mu_i, 1]^n$: 
if $a\geq b$ and $a\neq b$, then $T(a)> T(b)$.
	Observe that for all $\alpha \in (0,1)$ and $a \in [\bar \varepsilon \mu_i, 1]^n$ it holds that 
	\begin{align}
		T_i(\alpha a ) &= \alpha T_i(a) + \alpha (1-\alpha) \frac{\beta_i}{\delta_i} \, x_i  \frac{\gamma_i x_i + \delta_i}{\theta_i (1-x_i) + \gamma_i x_i}a_i \sum_{\ell \in \mc N_i} W_{i \ell} a_\ell \nonumber\\
		& >\alpha T_i(a), \label{eq:subhom}
	\end{align}
	where the inequality follows from \eqref{eq:xi-omega}.
	
	Suppose now that the map $T$ admits two distinct fixed points in $[\bar \varepsilon \mu_i, 1]^n$, so that $a' = T(a')$, $a'' = T(a'')$ and let $\alpha^* = \sup_\alpha\{ \alpha a'' \leq a'\}$.
	But from \eqref{eq:subhom}, defining 
	$$\eta :=\alpha^* (1-\alpha^*) \frac{\beta_i}{\delta_i} \, x_i  \frac{\gamma_i x_i + \delta_i}{\theta_i (1-x_i) + \gamma_i x_i} \sum_{\ell \in \mc N_i} W_{i \ell} a_\ell >0 \,,$$
	we get that 
	\begin{align*}
	 	a'_i = T_i(a') \geq T_i(\alpha^* a'') &= \alpha^* T_i(a'') + \eta a''_i=( \alpha^*+\eta) a''_i ,
	\end{align*}
	which contradicts the definition of $\alpha^*$ as the supremum of all $\alpha$ such that $\alpha a'' \leq a'$. Therefore, we have shown that $T$ admits a unique fixed point in $[\bar \varepsilon \mu_i, 1]^n$. Note that this implies a unique equilibrium in $d$ from \eqref{eq:fixed_points_equation} and in $x$ from the third equation in \eqref{eq:model2}, which leads to the uniqueness of the equilibrium for the system \eqref{eq:model2} and hence for the original system \eqref{eq:vector_model}.

(ii) Define the error variables $e^a_i(t) := a_i(t)- a_i^\dag$ and $e_i^d(t) := d_i(t) - d_i^\dag$. Then, 
{\small{\begin{align*}
			e_i^a(t+1) &= a_i(t) + \beta_i x_i(t)(1\!-\! a_i(t)\!-\! d_i(t))\!\! \sum_{j \in \mc N_i}\! W_{ij} a_j(t) +\\ 
			&\mspace{12mu}- \delta_i a_i(t) - a_j^\dag \\
			& = e_i^a(t) + \beta_i x_i(t)(1 -a_i^\dag -d_i^\dag) \!\! \sum_{j \in \mc N_i} \! W_{ij} (e_j^a(t) + a_j^\dag) +\\ 
			& \mspace{12mu} - \beta_i x_i(t)\!(e_i^a(t)\!+\!e_i^d(t))\!\! \sum_{j \in \mc N_i}\! W_{ij} (e_j^a(t) \!+ \! a_j^\dag) \!-\! \delta_i (e_i^a(t) \!+\! a_i^\dag) \\
			%&= (1-\delta_i)e_j^a(t) +\beta_i x_i(t)(1 - e_j^a(t)-i_j^* - e_j^d(t)-r_j^*) \sum_{j \in \mc N_i} W_{ij} (e_k^a(t) + i_k^*)- \delta_i i_j^*\\
			%			&= (1-\delta_i)e_j^a(t) +\beta_i x_i(t)(1 - e_j^a(t)-i_j^* - e_j^d(t)-r_j^*) \sum_{j \in \mc N_i} W_{ij} e_k^a(t) +\\
			%			&\quad - \beta_i x_i(t)(e_j^a(t)+ e_j^d(t)) \sum_{j \in \mc N_i} W_{ij} i_k^* \\
			&= \Big(1-\delta_i-  \beta_i x_i(t)\sum_{j \in \mc N_i} W_{ij} a_j^\dag\Big)e_i^a(t) +\\
			&\mspace{12mu}+\beta_i x_i(t)(1 - a_i(t) - d_i(t)) \sum_{j \in \mc N_i} W_{ij} e_j^a(t) +\\
			&\mspace{12mu}- \beta_i x_i(t)e_i^d(t) \sum_{j \in \mc N_i} W_{ij} a_j^\dag .
\end{align*}}}
%where $\mc B^* = \mathrm{diag}\big( \mathrm{diag}\big(\beta x(t))\big) A a^* \big)$.
Furthermore,
{\small	\begin{align*}
		e_i^d(t&+1) = d_i(t+1) - d_i^\dag \\
		&= e_i^d(t) \!+\! \delta_i a_i(t) \!-\! \gamma_i x_i(t) d_i(t) \!+\! \theta_i (1-x_i(t)) (1 \!-\! a_i(t) \!-\! d_i(t)) \\
		&= e_i^d(t) + \delta_i (e_i^a(t) + a_i^\dag) - \gamma_i x_i(t) (e_i^d(t) + d_i^\dag) +\\
		&\quad + \theta_i (1-x_i(t)) (1 - e_i^a(t) -a_i^\dag- e_i^d(t) - d_i^\dag)  \\
		&= (1 - \gamma_i x_i(t) - \theta_i (1-x_i(t))) e_i^d(t) + (\delta_i - \theta_i (1-x_i(t))) e_i^a(t).
\end{align*}}
%	Then 
%	\begin{align*}
	%		e^a(t+1) &= (I \!-\! \Delta \!-\! \mathcal{B}^*\!+\! \mathrm{diag}\big(\beta x(t))\big) \mathrm{diag}\big(\1 - a(t) -d(t)\big)A) e^a(t) -\mc B^* e^d(t)\\
	%		e^d(t+1) &= (I - \Gamma(t) - \Theta(t)) e^d(t) + (\Delta - \Theta(t)) e^a(t) 
	%	\end{align*}
%	where $\Gamma(t) = \mathrm{diag}\big(\gamma x(t))\big)$ and $\Theta(t) = \mathrm{diag}\big(\theta x(t))\big)$. \\
Therefore, we can rewrite the systems for $e^a$ and $e^d$ in the following compact form 
\begin{equation}\label{eq:system2}
	\begin{bmatrix}
		e^a(t+1) \\
		e^d(t+1)
	\end{bmatrix} = \begin{bmatrix}
		F_{11}(t) & F_{12}(t) \\
		F_{21}(t) & F_{22}(t)
	\end{bmatrix}\begin{bmatrix}
		e^a(t) \\
		e^d(t)
	\end{bmatrix},
\end{equation}
where
\begin{align*}
	F_{11}(t) &=  I- \Delta - \mathcal{B}^\dag + B\mathrm{diag}(x(t))\mathrm{diag}(\1\!-\! a(t) \!-\! d(t)) W, \\
	F_{12}(t) &= -\mc B^\dag, \\
	F_{21}(t) &=  \Delta - \Theta(I -\mathrm{diag}(x(t))), \\
	F_{22}(t) &=I - \Gamma \mathrm{diag}(x(t)) - \Theta(I -\mathrm{diag}(x(t))).
\end{align*}

%%%%%%%%%% NEW PROOF %%%%%%%%%%%%%%%%%%%%%%%%%%%%%%
In order to study the stability of system~\eqref{eq:system2}, we first analyze the $\infty$-norms of the block matrices appearing in its dynamics, and we show that they are uniformly bounded in time.
In particular, by Proposition \ref{prop:invariant} and the definition in \eqref{eq:varphi}, we have that $\sup_t \|F_{11}(t)\|_{\infty} \leq \varphi < 2$.
Similarly, by Assumption~\ref{ass:ass1} and \eqref{eq:c}, we have $c = \sup_x \|F_{22}\|_\infty < 1$. Moreover, let $b := \sup_x \|F_{12}\|_\infty$, and note that, from Proposition \ref{prop:invariant}, we have  $b<1$ and $\nu= \sup_x \|F_{21}\|_\infty < 1$.

From the system \eqref{eq:system2}, we can write
\begin{align}
	\| e^a(t+1)\|_\infty &= \| F_{11}(t) e^a(t) + F_{12}(t) e^d(t)\|_\infty \nonumber\\
	&\leq \varphi \|e^a(t)\|_\infty + b\|e^d(t)\|_\infty, \label{eq:bound1}
\end{align}
and 
\begin{align}
	\| e^d(t+1)\|_\infty &= \| F_{21}(t) e^a(t) + F_{22}(t) e^d(t)\|_\infty\nonumber \\
	&\leq \nu \|e^a(t)\|_\infty + c \|e^d(t)\|_\infty. \label{eq:bound2}
\end{align}
Let us now define the auxiliary variable 
$$g(t):= \begin{bmatrix} \|e^a(t)\|_\infty \quad \sqrt{ \tfrac{b}{\nu}} \|e^d(t)\|_\infty \end{bmatrix}^\top.$$
Considering the bounds in \eqref{eq:bound1} and \eqref{eq:bound2}, we get the following auxiliary system:
\begin{equation}\label{eq:aux-system}g(t+1) \leq \,G \, g(t),\end{equation}
where $G$ is defined in \eqref{eq:G}. Note that matrix $G$ is nonnegative and symmetric. Hence, by the Perron--Frobenius theorem, its spectral radius $\rho(G)$ coincides with its induced $\infty$-norm. Therefore,
$$\| g(t+1)\|_\infty \leq \|G\|_\infty \| g(t)\|_\infty  = \rho(G)  \| g(t)\|_\infty.$$
Since $\rho(G)<1$, it follows that $g(t)$ goes to $ 0$ exponentially as $t$ tends to $\infty$. Consequently, both $\|e^a(t)\|_\infty$ and $\|e^d(t)\|_\infty$ converge exponentially to zero, and thus the system \eqref{eq:system2} is exponentially stable.

Note again that from Assumption \ref{ass:ass1} it follows that $\Lambda\tilde{W}$ is a Schur stable matrix (see Lemma 5 in \cite{FRASCA2013212}) and the opinions vector $x$ asymptotically converges to $(I- \Lambda\tilde{W})^{-1} \big((I-\Lambda-\Xi) x(0) + \Xi W a^\dag)$, since it is the unique asymptotically stable equilibrium of the resulting Friedkin--Johnsen model.
\end{proof}\medskip

\begin{remark}
It is worth noting that, while \cite{Xu2024} introduces the discrete-time networked epidemic SIV model that forms the basis of our adoption dynamics, here we go further by explicitly proving the uniqueness of the adoption-diffused equilibrium, which constitutes a novel contribution compared to \cite{Xu2024}. Establishing uniqueness will be crucial also for the following optimal control problem, as it ensures that the system has a well-defined long-term state toward which control strategies can be reliably designed.
\end{remark}\medskip

{The following proposition provides a more practical, sufficient condition for the stability of the adoption-diffused equilibrium. 
%================================= new COROLLARY ==============================================
\begin{proposition}
	Consider the adoption-opinion model \eqref{eq:vector_model} with $\rho(M(x^*)) >1$. If it also holds that
		\begin{equation}\label{eq:hyp1}   
			\sum_{i \in \mc N_j} (\beta_i + \xi_i)W_{ij}  + \theta_j \alpha_j (x_j(0)-1) <0,
			\end{equation}% \leq 2 - \sum_{j \in \mc N_i} W_{ij} + 2A_{jj}\end{equation}
 and 
	\begin{equation}\label{eq:hyp2}  
			 \beta_j < \theta_j \,,
		\end{equation}
		for all $j=1, \dots,n$, then adoption-diffused equilibrium $(a^\dag, d^\dag, x^\dag)$ is asymptotically stable for all initial conditions $(a(0), d(0), x(0))\neq(\0, d^*, x^*)$.
\end{proposition}\medskip
\begin{proof}
	The Jacobian matrix of \eqref{eq:vector_model} at the adoption-diffused equilibrium $(a^\dag, d^\dag, x^\dag)$ can be partitioned into three column blocks. 
	The first block is
	{\small $$\mspace{-10mu} J_1 \mspace{-3mu}:= \mspace{-3mu} \begin{bmatrix}
		&\mspace{-25mu}\ds I + B\diag(x^\dagger\!)\mathrm{diag}(\1 - a^\dagger \! - d^\dagger \!)W \! - \! B\mathrm{diag}(x^\dagger\! )\mathrm{diag}(W \! a^\dagger\! ) \!-\!  \Delta\\[1ex]
		&\ds \Delta - \Theta (I - \mathrm{diag}(x^\dagger))  \\[1ex]
		&\Xi W 
	\end{bmatrix}\mspace{-4mu}.$$}
	For each column $j$, summing all elements gives
	\begin{align*}
		\sum_i {J_1}_{ij} =&	1 + \sum_i \beta_i x^\dagger_i (1 - a^\dagger_i - d^\dagger_i) W_{ij} - \beta_j x^\dagger_j \sum_k W_{jk} a_k^\dagger+ \\ %- \delta_j + \delta_j
		& - \theta_j (1 - x^\dagger_j) + \sum_i \xi_i W_{ij}\\
		%&\qquad \textcolor{blue}{\beta_i < \xi_i,  (1 - a^*_i - d^*_i)  <1, \text{W substochastics} }\\
		\leq& 1  + \sum_i \beta_i W_{ij} - \beta_j x^\dagger_j \sum_k W_{jk} a_k^\dagger - \theta_j (1 - x^\dagger_j) + \sum_i \xi_i W_{ij}\\ 
		%&\mspace{150mu} \textcolor{blue}{\beta_j x^*_j \sum_k W_{jk} a_k^*\geq 0}\\
		\leq & 1 +  \sum_i (\beta_i + \xi_i)W_{ij} - \theta_j (1 - x^\dagger_j) \\ 
		%& \mspace{150mu} \textcolor{blue}{x_j^* \leq \alpha_j x_j(0) + \lambda_j+ \xi_j  \quad \text{from opinion dynamics}}\\
		 =& 1 +  \sum_i (\beta_i + \xi_i) W_{ij} - \theta_j + \theta_j \alpha_j x_j(0) + \theta_j \lambda_j + \theta_j \xi_j  \\
		 \leq&  1 +   \sum_i (\beta_i + \xi_i) W_{ij}  + \theta_j \alpha_j (x_j(0)-1) \\
		%		&\leq 1 + \sum_i \frac{\delta_i a^*_i }{\sum_k W_{ik} a^*_k} W_{ij} - \beta_j x^*_j (1 - a^*_i - d^*_i) \sum_k W_{jk} a_k^* - \theta_j (1 - x^*_j) + \sum_i \xi_i W_{ij}\\ 
		%		&= 1 + \sum_i \frac{\delta_i a^*_i }{\sum_k W_{ik} a^*_k} W_{ij} - \delta_j a_j^* - \theta_j (1 - x^*_j) + \sum_i \xi_i W_{ij}\\ 
		%		& \qquad \textcolor{blue}{\text{Blue terms are equal at the equilibrium}}\\ 
		%		& = 1- \beta_j x^*_j \sum_k W_{jk} a_k^* + \delta_j - \theta_j (1 - x^*_j) + \sum_i \xi_i W_{ij}\\
		%		& \qquad \textcolor{blue}{x_j^* \leq \alpha_j x_j(0) + \lambda_j+ \xi_j \sum_k W_{jk} a_k^* \quad \text{from opinion dynamics}}\\
		%		&\leq  1- \beta_j x^*_j \sum_k W_{jk} a_k^* + \delta_j - \theta_j  + \theta_j \alpha_j x_j(0) + \theta_j \lambda_j + \theta_j \xi_j \sum_k W_{jk} a_k^* + \sum_i \xi_i W_{ij}\\
		%		& \qquad \textcolor{blue}{- \beta_j x^*_j \sum_k W_{jk} a_k^*\leq 0 \mspace{250mu} \sum_k W_{jk} a_k^* \leq 1} \\
		%		& \leq  1 + \delta_j - \theta_j  + \theta_j \alpha_j x_j(0) + \theta_j \lambda_j + \theta_j \xi_j  + \sum_i \xi_i W_{ij} \\
		%		& = 1 + \delta_j  + \theta_j \alpha_j (x_j(0)-1)  + \sum_i \xi_i W_{ij} \\
		 < &1,
	\end{align*}
	where the first and second inequalities follow from Proposition \ref{prop:invariant}, the second equality follows from the opinion dynamics in \eqref{eq:vector_model} and the fact that $x_j^\dagger \leq \alpha_j x_j(0) + \lambda_j+ \xi_j $. Moreover, the last inequality follows from \eqref{eq:hyp1}.
	%for each column $j=1, \dots,n$. %\textcolor{blue}{from first condition of the theorem (11)}
	
	The second block of the Jacobian matrix of \eqref{eq:vector_model} at $(a^\dagger, d^\dagger, x^\dagger)$ is  
	$$J_2 := \begin{bmatrix}
		\ds - B\,\mathrm{diag}(x^*)\,\mathrm{diag}(W a^\dagger) &\\[1ex]
		I - \Gamma\,\mathrm{diag}(x^\dagger) - \Theta (I - \mathrm{diag}(x^\dagger)) & \\[1ex]
		\0 &
	\end{bmatrix}.$$
	The sum of the elements is
	\begin{align*}
	 \sum_i {J_2}_{ij} =&	1- \beta_j x^\dagger_j \sum_k W_{jk} a_k^\dagger- \gamma_j x^\dagger_j	- \theta_j (1 - x^\dagger_j)< 1,
	\end{align*}
	for each column $j=1, \dots,n$.   % \textcolor{blue}{since $\theta_j\in(0,1]$}
	
	Finally, the third block is
	$$J_3:=\begin{bmatrix}
		\ds  B\,\mathrm{diag}(\1 - a^\dagger- d^\dagger)\,\mathrm{diag}(W a^\dagger) &\\[1ex]
		- \Gamma\,\mathrm{diag}(d^\dagger) - \Theta\,\mathrm{diag}(\1 - a^\dagger - d^\dagger) & \\[1ex]
		\Lambda \tilde{W} &
	\end{bmatrix}.$$
	From \eqref{eq:hyp2}, the sum of the elements for each column $j$ is 
	\begin{align*}
		\sum_i {J_3}_{ij} &=\beta_j (1 - a^\dagger_j - d^\dagger_j) \sum_k W_{jk} a_k^\dagger - \gamma_j d^\dagger_j - \theta_j (1 - a^\dagger_j - d^\dagger_j)+ \\
		&\quad +\sum_i \lambda_i \tilde{W}_{ij}  \\
		%&\qquad \textcolor{blue}{\sum_k W_{jk} a_k^* \leq 1\qquad \text{ and } \lambda_j \leq 1 } \\
		&\leq \beta_j (1 - a^\dagger_j - d^\dagger_j)  - \gamma_j d^\dagger_j - \theta_j (1 - a^\dagger_j - d^\dagger_j)+ \sum_i \tilde{W}_{ij}  \\
		&< - \gamma_j d^\dagger_j +1 \\
		& \leq 1,
	\end{align*}
	 where the first inequality follows from $\lambda_j \leq 1 $ and Proposition \ref{prop:invariant}, while the second inequality follows from stochasticity of $\tilde{W}$ and \eqref{eq:hyp2}.
	Therefore, the matrix $J$ is column substochastics, and thus the dominant eigenvalue is less than $1$ and then all eigenvalues are less than $1$. By the linearization theorem, the adoption-diffused equilibrium is locally asymptotically stable.
\end{proof}
\begin{remark}
	Similar to the discussion in Remarks \ref{rem:rem1} and \ref{rem:rem2}, note that if the initial opinions are uncertain, the most conservative approach is to evaluate \eqref{eq:hyp1} using the upper bound $\bar x_0$, which yields a worst-case stability condition:
	\begin{equation*}\label{eq:hyp1-2}   \sum_{i \in \mc N_j}(\beta_i+ \xi_i)W_{ij}  + \theta_j \alpha_j (\bar{x_0}_j-1) <0, \quad \forall j  = 1,\dots,n.\end{equation*}
	%given also $\beta_j < \theta_j$ for all $j=1,\dots,n.$
\end{remark}
}

\section{Formulation of the Control Problem}\label{sec:mpc}
The spread of innovation is rarely a spontaneous process. Despite the potential environmental benefits of electrifying the vehicle fleet, the actual number of electric vehicles in use remains relatively small \cite{rezvani2015advances}. 
In real-world scenarios, external interventions are required to overcome skepticism, reduce adoption barriers, and sustain the interest of users over time. Such interventions can take many forms, from awareness campaigns to economic incentives or technical improvements. To capture these actions in our framework, we introduce a control function $u_i: [0,+\infty) \to [0,+\infty)$, which represents the intensity of intervention at time $t$ for the community $i$. The goal is to steer the coupled adoption-opinion dynamics toward widespread and persistent adoption while respecting natural limitations on available resources.

In this section, we consider three ways in which control actions can influence the system. Each reflects a realistic policy lever that can be used to shape the dynamics and create favorable conditions for innovation diffusion.

\subsection{Control Strategies for Adoption-Opinion Dynamics}
We consider three distinct control strategies through which interventions can influence the coupled adoption-opinion dynamics:  
(1) \textit{Opinion shaping},  
(2) \textit{Adoption propensity enhancement}, and  
(3) \textit{Dissatisfaction reduction}.  
Each method acts on a different part of the system, and in principle they could be combined for greater effectiveness. In this paper, however, we focus on their separate effects for analytical clarity.\smallskip

{\textit{(1) Opinion shaping.} } 
One natural way to influence the process is by acting on the opinion dynamics. Social attitudes play a crucial role in determining whether individuals are willing to adopt new technologies. Public information campaigns, educational programs, or targeted communication can shift perceptions in favor of innovation. 
Empirical studies have shown that informational interventions can significantly influence consumers’ willingness to adopt sustainable behaviors. 
For example, \cite{filippini2021nudging} find that informational nudges concerning costs and environmental impacts significantly affect the stated purchase preferences for electric motorcycles in Nepal. 
Similarly, \cite{li2020analysis} show that popular science campaigns can increase the initial proportion of positive attitudes in consumer networks. 
Motivated by such evidences, we model policy interventions on opinions by modifying the stubbornness of communities in the opinion update rule. Specifically,
\begin{equation}\label{eq:opinion-control} x(t+1) = (I - \Lambda - \Xi)(x(0) + u(t)) + \Lambda \tilde{W} x(t) + \Xi W a(t),\end{equation}
where the control input $u(t)$ represents the intervention effort aimed at positively shifting baseline opinions.\smallskip

{\textit{(2) Adoption propensity enhancement.}  }
Even when opinions are favorable, adoption may be slowed down by practical obstacles such as high costs, complex usage, or lack of compatibility. Policies like subsidies, financial incentives, or technical support can remove such barriers, effectively increasing the adoption rate. 
A substantial body of literature emphasizes the importance of economic incentives and infrastructure support. For instance, the work \cite{silvia2016assessing} shows that subsidies reducing purchase prices, investments in public charging networks, and government fleet purchases are highly effective in promoting electric vehicles adoption. Similarly, \cite{li2022diffusion} demonstrates, using a multi-agent system dynamics model, that adoption is most sensitive to purchase subsidies, followed by charging infrastructure development. 
In our model, such interventions are captured by replacing the original adoption parameter with its controlled counterpart:
\begin{equation}\label{eq:beta-control}\beta^u_i(x_i(t)) := \beta_i x_i(t) (1 + u_i(t)),\end{equation}
where $u_i(t)$ represents the strength of the intervention in community $i$.\smallskip

{\textit{(3) Dissatisfaction reduction.}  }
Finally, sustaining adoption requires minimizing dissatisfaction among adopters. If negative experiences accumulate, individuals may abandon the innovation, hindering its long-term diffusion \cite{rezvani2015advances}. Improving product quality, providing reliable customer service, or enhancing user experience are all measures that reduce dissatisfaction. 
%For example, \cite{Rezvani2015} highlight that poor post-adoption experiences act as a behavioral barrier to continued adoption. 
Inspired by these findings, we model dissatisfaction control through a modified dissatisfaction rate:
\begin{equation}\label{eq:delta-control}\delta^u_i := \delta_i (1 - u_i(t)).\end{equation}

In practical applications, control efforts are constrained by limited resources. Campaigns, subsidies, or improvements all come at a cost, and decision makers must allocate finite resources wisely. To capture such constraints, we impose a budget restriction on the overall control input:
\begin{equation}\label{eq:constraint} \1^\top u(t) \leq C, \quad \forall t \geq0 \,,\end{equation}
where $C > 0$ represents the available budget for interventions at each time step, ensuring that the control effort remains feasible.

\subsection{Constant Control Policy}
Under the budget constraint introduced above, we first consider the design of a \emph{constant control policy}, denoted by $\bar{u}$, aimed at optimizing the long-term behavior of the system. The goal of this policy is to simultaneously enhance adoption, reduce dissatisfaction, and limit the cost of interventions.
Formally, the optimization problem is expressed as
\begin{equation}\label{eq:opt-constant}
\begin{aligned}
	\bar{u}  \mspace{10mu}= \mspace{10mu} \argmin_{u} \quad &J(u) =  \sum_{i=1}^n \left[- Q^a_i (a_{c, i}^\dag)^2 + Q^d_i (d_{c, i}^\dag)^2 + L_i u_i^2 \right] \\
	\text{subject to} \quad & \mathbf{1}^\top u \leq C, \quad u \in [\0,\1 - x(0)], \\
	& {\rho(M_c(x_c^*)) > 1 .}%\,\, \text{ and condition } \eqref{eq:stab-cond2}.} % \eqref{eq:hyp3}, \eqref{eq:hyp4}.
\end{aligned}
\end{equation}
Here, $a_{c, i}^\dag$ and $d_{c, i}^\dag$ denote the equilibrium fractions of adopters and dissatisfied individuals in community $i$ under the controlled dynamics, {$M_c$ is the controlled version of the matrix $M$, defined in \eqref{eq:M} and $x_c^*$ is the minimum bound for the opinion vectors under the specified controlled dynamics. Note that under the opinion shaping control \eqref{eq:opinion-control}, this bound becomes  
$$x^{*}_c=(I-\Lambda \tilde{W})^{-1}(I-\Lambda-\Xi)(x(0)+u). $$} 
The weights $Q^a_i$, $Q^d_i$, and $L_i$ encode the relative priority of promoting adoption, reducing dissatisfaction, and minimizing intervention costs, respectively.
The resulting constant control $\bar{u}$ represents a fixed intervention strategy that, once applied, steers the system toward a favorable equilibrium, maximizing adoption and limiting dissatisfaction while remaining within the allocated budget.

Before introducing more sophisticated control frameworks, it is important to establish the existence of an optimal constant control. This result ensures that, for any of the three control strategies we consider (opinion shaping, adoption propensity enhancement, or dissatisfaction reduction) there exists at least one constant control that minimizes the long-term cost functional. 
Let us define 
$$U := [0, \min\{C, 1 - \max_i x_i(0)\}]^n $$
as the compact set of admissible constant controls.
\begin{lemma}\label{lem:exist-constant-control}
	Consider the optimal constant control problem \eqref{eq:opt-constant}. Then, there exists at least one constant control $\bar{u} \in U$ that attains the minimum of the cost functional, i.e.,
	$$\bar{u} \mspace{8mu}= \mspace{8mu} \argmin_{u \in U} J(u).$$
\end{lemma}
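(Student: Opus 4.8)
The plan is to apply the Weierstrass extreme value theorem: a continuous real-valued function on a nonempty compact set attains its minimum. The argument thus decomposes into verifying that $U$ is nonempty and compact, and that the objective $J$ is continuous on $U$. The first point is immediate. Since $C>0$ and $x_i(0)\in[0,1]$ for every $i$, the upper endpoint $\min\{C,\,1-\max_i x_i(0)\}$ is nonnegative, so $U$ is a nonempty Cartesian product of closed bounded intervals and hence a closed, bounded, and therefore compact subset of $\R^n$ by Heine--Borel.

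The substance of the proof is the continuity of $J$. The cost depends on $u$ explicitly through the penalty terms $L_i u_i^2$, which are obviously continuous, and implicitly through the controlled steady-state fractions $a^*_{c,i}(u)$ and $d^*_{c,i}(u)$. It therefore suffices to show that the equilibrium map $u\mapsto (a^*_c(u),d^*_c(u))$ is continuous on $U$; continuity of $J$ then follows by composition with the smooth squaring and finite summation operations. To this end, I would characterize the controlled equilibrium $z^*=(a^*_c,d^*_c,x^*_c)$ as a root of $F(z,u):=z-G(z,u)$, where $G$ is the fixed-point map obtained by setting $z(t+1)=z(t)$ in the controlled dynamics, with the control entering through \eqref{eq:opinion-control}, \eqref{eq:beta-control}, or \eqref{eq:delta-control} according to the chosen lever. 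Since $G$ is a polynomial/rational function of $(z,u)$ on the admissible domain, it is smooth, and $\partial_z F = I-\partial_z G$ evaluated at $z^*$ equals $I-J$, where $J$ is the Jacobian of the controlled dynamics at the equilibrium. The asymptotic stability established in Theorem \ref{theo:diffused-equilibrium} under conditions \eqref{eq:hyp1}, \eqref{eq:hyp3}, and \eqref{eq:hyp4} guarantees $\rho(J)<1$, so $I-J$ is nonsingular; the implicit function theorem then yields a continuously differentiable local branch $u\mapsto z^*(u)$, and in particular continuity of $a^*_c(u)$ and $d^*_c(u)$.

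The hard part will be upgrading this local, branch-wise continuity to a statement valid on all of $U$, since Theorem \ref{theo:diffused-equilibrium} only asserts the existence of \emph{at least one} diffused equilibrium and does not in general guarantee uniqueness, so $a^*_c(u)$ need not be single-valued a priori. The resolution I would pursue is to restrict attention to the feasible region carved out by the constraints $R^A_{0,\min}>1$ and \eqref{eq:hyp1}, \eqref{eq:hyp3}, \eqref{eq:hyp4}, where the selected equilibrium is the asymptotically stable one tracked by the closed-loop dynamics; on this region the nonsingularity of $I-J$ holds uniformly, so the local branches patch together into a globally continuous selection, and a standard sequential/compactness argument extends continuity to the closure.

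Once continuity of $J$ on the compact set $U$ is secured, Weierstrass immediately furnishes a minimizer $\bar u\in U$, completing the proof. I would also note that the same argument is insensitive to the sign of the $Q^a_i$ term and applies verbatim to each of the three control strategies, since in every case the control enters $G$ smoothly and the stability conditions of Theorem \ref{theo:diffused-equilibrium} supply the required invertibility of $I-J$.
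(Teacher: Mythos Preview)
Your approach is essentially the same as the paper's: both invoke the Weierstrass extreme value theorem on the compact set $U$ after arguing that $J$ is continuous, which in turn reduces to continuity of the equilibrium map $u\mapsto (a_c^*,d_c^*)$. The paper's own proof is considerably terser---it simply asserts that this equilibrium map is continuous ``because the adoption-opinion dynamics are continuous functions of $u$'' and does not invoke the implicit function theorem, the stability conditions of Theorem~\ref{theo:diffused-equilibrium}, or the uniqueness/selection issues you carefully flag; in that sense your argument supplies justification the paper leaves implicit.
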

\begin{proof}
	The proof follows from standard arguments in optimization over compact sets. First, the map $u \mapsto z^\dag(u)$, where $z^\dag(u)$ denotes the equilibrium state $(a_c^\dag, d_c^\dag)$ under constant control $u$, is continuous in $u$ because the adoption-opinion dynamics are continuous functions of $u$. The cost functional is therefore continuous in $u$ as a finite sum of continuous functions. Since $U$ is compact, the Weierstrass Extreme Value Theorem guarantees that $J(u)$ attains a minimum on $U$. Hence, there exists $\bar{u} \in U$ such that
	$$J(\bar{u}) = \min_{u \in U} J(u).$$
\end{proof}

\subsection{Model Predictive Control Formulation}
While constant control provides a simple baseline strategy, real-world interventions often benefit from dynamic adjustments that respond to the evolving state of the system. To capture this, we consider a time-varying control policy defined over a finite planning horizon $N > 0$. This Model Predictive Control (MPC) framework enables adaptive, state-dependent interventions, allowing resources to be allocated more efficiently and responses to unexpected system changes.
At each time $t$, the MPC approach solves a finite-horizon optimal control problem: given the current state, it computes a sequence of control inputs $u(\cdot|t)$ that minimizes a cumulative cost while ensuring feasibility of the system dynamics and compliance with budget constraints. The control input can target any of the previously defined intervention channels (opinion shaping \eqref{eq:opinion-control}, adoption propensity enhancement \eqref{eq:beta-control}, or dissatisfaction reduction \eqref{eq:delta-control}) depending on the chosen intervention policy.

Let $a(k|t)$, $d(k|t)$, and $x(k|t)$ denote the predicted evolution of adopters, dissatisfied individuals, and community opinions at future time $k$ within the prediction horizon, given the system state at time $t$. We denote the right-hand sides of the adoption, dissatisfaction, and opinion dynamics equations in \eqref{eq:vector_model} (including the selected control) by $F_1(\cdot)$, $F_2(\cdot)$, and $F_3(\cdot)$, respectively.

Using this notation, the MPC problem seeks the control sequence that optimally guides the system over the horizon while ensuring feasibility and adherence to intervention limits. Formally, it is defined as follows:
\begin{equation}\label{eq:opt}
\begin{aligned}
	\min_{U(t)} \quad & \sum_{k=0}^{N-1} \sum_{i=1}^n \left[ - Q^a_i a_i^2(k|t) + Q^d_i d_i^2(k|t) + L_i u_i^2(k|t) \right] \\
	\text{s.t.} \quad & \mathbf{1}^\top u(k|t) \leq C, \quad \forall k \in [0, N-1] \\
	& u(k|t) \in [\0, \1 - x(0)], \quad \forall k \in [0, N-1] \\
	& (a(0|t), d(0|t), x(0|t)) = (a(t), d(t), x(t)) \\
	& \begin{cases}
		a(k+1|t) = F_1(a(k|t), d(k|t), x(k|t), u(k|t)) \\
		d(k+1|t) = F_2(a(k|t), d(k|t), x(k|t), u(k|t)) \\
		x(k+1|t) = F_3(a(k|t), d(k|t), x(k|t), u(k|t))
	\end{cases} \\[2pt]
	& (a(N|t), d(N|t), x(N|t)) = (a_c^\dag, d_c^\dag, x_c^\dag),
\end{aligned}
\end{equation}
where $U(t) = \{ u(0|t), \dots, u(N-1|t) \}$ denotes the sequence of control actions over the horizon. The final condition ensures convergence to the desired equilibrium $(a_c^\dag, d_c^\dag, x_c^\dag)$, which depends on the type of intervention and the corresponding asymptotic behavior under constant control $\bar{u}$ defined in \eqref{eq:opt-constant}.
This MPC formulation provides a flexible and adaptive framework for implementing control strategies that simultaneously promote adoption, mitigate dissatisfaction, and respect budget constraints over time. The complete procedure of the predictive control algorithm is summarized in Algorithm~\ref{alg:1}.

\begin{algorithm}
	\caption{MPC algorithm} \label{alg:1}
	\begin{algorithmic}[1]
		\For{$t \geq 0$}
		\State Observe the current state $(a(t), d(t), x(t)) \in [0,1]^{3\mc V}$.
		\State Solve problem \eqref{eq:opt} to obtain optimal control set $U^*(t)$.
		\State Apply the first control input: $u(t) = u^*(0|t)$ to the system \eqref{eq:vector_model}, with \eqref{eq:opinion-control}.
		%\State Repeat the process at the next time step.
		\EndFor
	\end{algorithmic}
\end{algorithm}

In principle, solving the optimal control problem \eqref{eq:opt} yields a complete control sequence $U^*(t)$ over the entire prediction horizon of length $N$, which could be applied all at once. In practice, however, it is more effective to implement the control using a receding horizon strategy: at each time step $t$, only the first control action $u(t) = u^*(0|t)$ is applied, and the optimization is recomputed at the next time instant based on the updated system state. This approach allows the controller to adapt dynamically to changes in the system or unexpected disturbances, maintaining robustness to modeling errors or external shocks, while still taking advantage of the predictive power and long-term planning offered by the MPC framework.

\subsection{Recursive Feasibility and Stability of the Optimal Control Problem}
We refer to the three equations of the adoption, dissatisfaction, and opinion dynamics (including the selected control) as the \emph{controlled adoption-opinion model}. Before formalizing the optimal control problem, it is useful to clarify what we mean by a feasible problem:
\begin{definition}
		Given initial conditions $(a(0), d(0), x(0)) \in [0,1]^{3 \mc V}$, we say that the optimal control problem in \eqref{eq:opt} is \textit{feasible} if there exists a control function $u(t)$ such that
		$$1^T u(k|t) \leq C, \quad u(k|t) \in [\0, \1 - x(0)], \quad \forall k \in [0, N-1],$$
		and the solution of the controlled adoption-opinion model with initial condition $(a(0), d(0), x(0))$ satisfies
		$(a(0|t), d(0|t), x(0|t)) = (a(t), d(t), x(t))$ and $(a(N|t), d(N|t), x(N|t)) = (a_c^\dag, d_c^\dag, x_c^\dag)$.
\end{definition}

This definition ensures that the control inputs respect budget and admissibility constraints while driving the system from the current state to the desired equilibrium over the prediction horizon. The following result establishes \emph{recursive feasibility} of the MPC problem \eqref{eq:opt}, guaranteeing that a feasible solution exists at each time step.
\begin{proposition}
	Let the initial conditions $a(0), d(0), x(0)$ belong to $[0,1]^{\mc V}$ and assume that the optimal control problem in \eqref{eq:opt} is feasible at time $t = 0$. Then, it remains recursively feasible for all $t \geq 0$. 
\end{proposition}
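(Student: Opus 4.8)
The plan is to use the standard terminal-constraint argument for recursive feasibility in MPC, exploiting the fact that the terminal state $(a_c^*, d_c^*, x_c^*)$ is, by construction, an equilibrium of the controlled dynamics under the constant control $\bar{u}$ from \eqref{eq:opt-constant}. The idea is to show that whenever a feasible (indeed optimal) control sequence exists at time $t$, a feasible candidate can be built at time $t+1$ by shifting the previous sequence forward by one step and appending the equilibrium-maintaining control $\bar{u}$ at the end. Recursive feasibility then follows by induction, with the base case being the assumed feasibility at $t=0$.

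First I would set up the inductive step. Assume \eqref{eq:opt} is feasible at time $t$, so there is an optimal sequence $U^*(t) = \{u^*(0|t), \dots, u^*(N-1|t)\}$ whose predicted trajectory satisfies all constraints, the initial condition $(a(0|t), d(0|t), x(0|t)) = (a(t), d(t), x(t))$, and the terminal condition $(a(N|t), d(N|t), x(N|t)) = (a_c^*, d_c^*, x_c^*)$. Applying the first input $u(t) = u^*(0|t)$ to the controlled model moves the closed-loop state to $(a(t+1), d(t+1), x(t+1))$, which in the nominal (disturbance-free) setting coincides exactly with the one-step-ahead prediction $(a^*(1|t), d^*(1|t), x^*(1|t))$, since the model used for prediction is the same as the one generating the true evolution.

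Next I would construct the candidate sequence at time $t+1$ as
\[
\tilde U(t+1) = \{ u^*(1|t), u^*(2|t), \dots, u^*(N-1|t), \bar u \}.
\]
The first $N-1$ entries are inherited from $U^*(t)$, so they automatically respect the budget constraint $\1^\top u(k|t) \le C$ and the admissibility constraint $u(k|t) \in [0, \1 - x(0)]$; moreover, being merely re-indexed, they drive the system from the new initial state $(a(t+1), d(t+1), x(t+1))$ to the equilibrium $(a_c^*, d_c^*, x_c^*)$ at step $N-1$ of the shifted horizon. The appended input $\bar u$ is admissible because $\bar u \in U$ satisfies $\1^\top \bar u \le C$ and $\bar u \in [0, \1 - x(0)]$ by definition of the constant-control problem \eqref{eq:opt-constant}, and applying $\bar u$ at the equilibrium leaves the state unchanged, so the terminal constraint $(a(N|t+1), d(N|t+1), x(N|t+1)) = (a_c^*, d_c^*, x_c^*)$ is met. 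Hence $\tilde U(t+1)$ is feasible.

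The main obstacle, and the step requiring the most care, is verifying the \emph{invariance of the terminal equilibrium under $\bar u$}: one must confirm that the steady state produced by the constant-control problem \eqref{eq:opt-constant} is genuinely a fixed point of $(F_1, F_2, F_3)$ under the control $\bar u$, so that appending $\bar u$ does not push the state off the target set. This is precisely where the design choice of linking the MPC terminal condition to the constant-control equilibrium is essential. A secondary technical point is to note that the admissibility bound $[0, \1 - x(0)]$ is expressed in terms of the fixed initial opinion $x(0)$ and does not depend on the running time index, so the inherited inputs remain admissible at $t+1$ without modification. Combining these observations, induction on $t$ yields recursive feasibility for all $t \ge 0$.
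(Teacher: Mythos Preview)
Your proposal is correct and follows essentially the same approach as the paper: both argue by induction, constructing the candidate sequence $\tilde U(t+1) = \{u^*(1|t),\dots,u^*(N-1|t),\bar u\}$ by shifting the previous optimal sequence and appending the constant control $\bar u$, then checking that the inherited inputs and $\bar u$ satisfy the budget and box constraints while the terminal equality is preserved. If anything, your write-up is slightly more careful in explicitly isolating the key technical point---that $(a_c^*,d_c^*,x_c^*)$ must be a fixed point of the controlled dynamics under $\bar u$---whereas the paper simply attributes the terminal condition to ``stability under $\bar u$.''
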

\begin{proof}
	Assume that the optimal control problem \eqref{eq:opt} is feasible at time $t \ge 0$, i.e., there exists a control sequence \begin{equation}\label{eq:opt-control-set}U^*(t) = \{ u^*(0|t), \dots, u^*(N-1|t) \}\,,\end{equation} 
	such that the associated trajectory of the controlled adoption-opinion model satisfies the state and control constraints and it reaches the equilibrium $(a_c^\dag, d_c^\dag, x_c^\dag)$ at step $N$.
	At time $t+1$, we define the following candidate optimal control sequence:
	\begin{equation}\label{eq:opt-control-set-t1}\tilde{U}(t+1) = \{ u^*(1|t), u^*(2|t), \dots, u^*(N-1|t), \bar{u} \},\end{equation}
	where $\bar{u}$ is the optimal constant control defined in \eqref{eq:opt-constant}. By construction, $\bar{u}$ satisfies the budget constraint and is restricted to the feasible range $\bar{u} \in [0, 1 - x(0)]$. Since the first $N-1$ elements of $\tilde{U}(t+1)$ coincide with the last $N-1$ elements of $U^*(t)$, they satisfy both constraints. Hence, all control constraints are satisfied.
	The corresponding state trajectory of the controlled adoption-opinion model starting from $(a(t+1), d(t+1), x(t+1))$ satisfies the state constraints for $k=0,\dots,N-1$, and the final state at time step $k=N$ coincides with the equilibrium $(a_c^\dag, d_c^\dag, x_c^\dag)$ due to action of constant control $\bar{u}$. Therefore, the problem remains feasible at time $t+1$.
	Since this argument applies iteratively for all $t \ge 0$, recursive feasibility of the optimal control problem \eqref{eq:opt} is established.
\end{proof}\smallskip

%\subsection{Stability of the optimal control problem}
The next result establishes that the solution generated by the MPC algorithm asymptotically converges to the desired adoption-diffused equilibrium.
\begin{proposition}
	Let the initial conditions $a(0)$, $d(0)$, $x(0)$ belong to $[0,1]^{\mc V}$ and assume that the optimal control problem in \eqref{eq:opt} is feasible at time $t = 0$. Then, the solution of MPC Algorithm \ref{alg:1} will asymptotically converge to the adoption-diffused equilibrium $(a_c^\dag, d_c^\dag, x_c^\dag)$. 
\end{proposition}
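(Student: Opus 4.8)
The plan is to use the optimal value function of \eqref{eq:opt} as a Lyapunov function for the closed loop, following the standard terminal-constraint MPC argument \cite{Rawlings2014}. Write $V_N(a,d,x)$ for the optimal value of \eqref{eq:opt} when the current state is $(a,d,x)$, let $U^*(t)$ be the associated optimal sequence \eqref{eq:opt-control-set}, and let $\ell(a,d,u) = \sum_i \big[-Q_i^a a_i^2 + Q_i^d d_i^2 + L_i u_i^2\big]$ be the stage cost. Since the algorithm applies $u(t)=u^*(0|t)$, the successor state coincides with the first predicted state, $(a(t+1),d(t+1),x(t+1)) = (a(1|t),d(1|t),x(1|t))$.

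First I would exploit recursive feasibility. By the preceding proposition the shifted-and-appended sequence $\tilde U(t+1)$ in \eqref{eq:opt-control-set-t1} is feasible at $t+1$, and because $(a_c^*,d_c^*,x_c^*)$ is the equilibrium under the constant control $\bar u$, appending $\bar u$ keeps the terminal state exactly at the equilibrium, so the terminal constraint is met. Evaluating the cost of $\tilde U(t+1)$ and telescoping against $V_N(t)$ — the middle $N-1$ stages coincide, the front stage $\ell(a(t),d(t),u(t))$ is dropped, and one terminal stage $\ell(a_c^*,d_c^*,\bar u)$ is added — together with optimality $V_N(t+1)\le J(\tilde U(t+1))$ yields the descent bound
\[
V_N(t+1) - V_N(t) \;\le\; \ell(a_c^*,d_c^*,\bar u) - \ell(a(t),d(t),u(t)).
\]

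Next I would turn this into a genuine decrease toward the equilibrium. Because the raw stage cost contains the sign-indefinite, adoption-maximizing term $-Q_i^a a_i^2$, it is not positive definite with respect to the target, so I would pass to the rotated stage cost $\tilde\ell(a,d,u) := \ell(a,d,u) - \ell(a_c^*,d_c^*,\bar u)$, which vanishes at $(a_c^*,d_c^*,\bar u)$ and differs from $\ell$ by a constant per stage, hence leaves the minimizer of \eqref{eq:opt} unchanged. Using that $\bar u$ is, by construction in \eqref{eq:opt-constant}, the optimal steady-state control, I would establish a strict dissipativity estimate of the form $\tilde\ell(a,d,u) \ge \sigma\big(\|(a,d)-(a_c^*,d_c^*)\|\big)$ for a class-$\mathcal{K}$ function $\sigma$ along feasible trajectories. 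Writing $\tilde V_N$ for the value function with the rotated cost, the telescoping bound becomes $\tilde V_N(t+1)-\tilde V_N(t) \le -\tilde\ell(a(t),d(t),u(t)) \le 0$.

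Finally, since $\tilde V_N$ is bounded below and nonincreasing it converges; summing the descent inequality over $t$ gives $\sum_t \tilde\ell(a(t),d(t),u(t)) < \infty$, whence $\tilde\ell(a(t),d(t),u(t)) \to 0$ and, by the dissipativity lower bound, $(a(t),d(t)) \to (a_c^*,d_c^*)$. Convergence of $x(t)$ to $x_c^*$ then follows from the Schur stability of $\Lambda\tilde W$ and the resulting stability of the driven Friedkin–Johnsen opinion subsystem, exactly as in the argument for $d(t)$ in the proof of Theorem \ref{theo:free-equilibrium}. The main obstacle is the third paragraph: the indefiniteness of the stage cost blocks the usual Lyapunov decrease, and converting the telescoping bound into a strict contraction requires the optimal-steady-state property of $\bar u$ together with a dissipativity (rotated-cost) argument; verifying the required strict dissipativity for the coupled nonlinear dynamics \eqref{eq:vector_model} is the technically delicate point, and one must also check that the terminal equality constraint is compatible with the forward invariance of $[0,1]^{3\mc V}$ guaranteed by Proposition \ref{prop:invariant}.
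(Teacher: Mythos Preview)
Your proposal follows essentially the same route as the paper: use the optimal value of \eqref{eq:opt} as a Lyapunov function, exploit recursive feasibility through the shifted sequence \eqref{eq:opt-control-set-t1}, telescope to obtain the one-step decrease $V_N(t+1)-V_N(t)\le \ell(a_c^*,d_c^*,\bar u)-\ell(a(t),d(t),u(t))$, and conclude convergence from monotonicity plus the lower bound on the stage cost at the optimal steady state. The paper carries out the same computation by expanding $\ell$ directly around $(a_c^*,d_c^*,\bar u)$ rather than invoking the rotated-cost/dissipativity language, and it reads off $x(t)\to x_c^*$ together with $a,d,u$ from the stage-cost limit rather than via the Schur stability of $\Lambda\tilde W$; your separate treatment of the opinion subsystem is actually cleaner, since $\ell$ does not depend on $x$.
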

\begin{proof}
	%	From Proposition \ref{prop:feasibility}, we have that the optimal control problem in \eqref{eq:opt} remains recursively feasible for all $t \geq 0$. The final state of the MPC is such that $(a(N|t),\, d(N|t), \, x(N|t)) = (a_c^*,\, d_c^*,\, x_c^*),$ where $a_{c}^*$, $d_{c}^*$ and $x_c^*$ denote the equilibrium values of adopters, dissatisfied individuals and opinions in the system \eqref{eq:vector_model}, incorporating the controlled opinion dynamics in \eqref{eq:opinion-control} with constant control $\bar{u}$ \eqref{eq:opt-constant}. From its definition $\bar u$ is such that the state belongs to $\mc S$, then it will asymptotically converge to the adoption-diffused equilibrium $(a^*, d^*, x^*)$ . 
	We start by rewriting the MPC cost function in \eqref{eq:opt} as
	$$ J(U(t)) = \sum_{k=0}^{N-1} \sum_{i=1}^n l_i(a_i(k|t), d_i(k|t), a_{c,i}^\dag, d_{c,i}^\dag, u_i(k|t), \ov u_i),$$
	where, for all $i \in \mc V$ and for all $k = 0,\dots, N-1$,  
	\begin{align*}
		&l_i(a_i(k|t), d_i(k|t), a_{c,i}^\dag, d_{c,i}^\dag, u_i(k|t), \ov u_i) \nonumber \\[3pt]
		&=  - Q^a_i a_i^2(k|t) + Q^d_i d_i^2(k|t) + L_i u_i^2(k|t) \nonumber \\[3pt]
		&= -Q^a_i (a_i(k|t) - a_{i,c}^\dag + a_{i,c}^\dag )^2 + Q^d_i (d_i(k|t) - d_{i,c}^\dag + d_{i,c}^\dag )^2 +\nonumber  \nonumber \\
		&\quad +L_i(u_i(k|t) - \ov u_i + \ov u_i)^2 \,.  
	\end{align*}
	Expanding the squares and collecting terms yield
	\begin{align}
		&l_i(a_i(k|t), d_i(k|t), a_{c,i}^\dag, d_{c,i}^\dag, u_i(k|t), \ov u_i) \nonumber \\[3pt]
		&= -Q^a_i {a_{i,c}^\dag}^2 - Q^a_i (a_i(k|t) - a_{i,c}^\dag)^2 - 2 Q^a_i a_{i,c}^\dag(a_i(k|t) - a_{i,c}^\dag) + \nonumber \\
		&\quad +  Q^d_i {d_{i,c}^\dag}^2 + Q^d_i (d_i(k|t) - d_{i,c}^\dag)^2 + 2 Q^d_i d_{i,c}^\dag(d_i(k|t) - d_{i,c}^\dag) +\nonumber  \\
		&\quad + L_i \ov u_i^2 + L_i (u_i(k|t)^2 - \ov u_i^2)^2 + 2 L_i \ov u_i (u_i(k|t) - \ov u_i) \label{eq:diseq} \\[3pt]
		& \geq  -Q^a_i {a_{i,c}^\dag}^2 \! + Q^d_i {d_{i,c}^\dag}^2 \!+L_i \ov u_i^2 \! - Q^a_i (a_i(k|t) + a_{i,c}^\dag)\!(a_i(k|t) - a_{i,c}^\dag)\nonumber \\[3pt]
		&\geq -Q^a_i {a_{i,c}^\dag}^2 + Q^d_i {d_{i,c}^\dag}^2 +L_i \ov u_i^2 \,, \nonumber
	\end{align}
	for all $i \in \mc V$ and for all $k = 0,\dots, N-1$. Note that from the definition of the equilibrium $(a_{c,i}^\dag, d_{c,i}^\dag, x_{c,i}^\dag)$ with constant policy $\ov u$ \eqref{eq:opt-constant}, the other terms are always non-negative. 
	
	Let us denote the cost associated with the optimal solution \eqref{eq:opt-control-set} at a given time instant $t$ and the one associated with the candidate solution \eqref{eq:opt-control-set-t1} as $J^*(t) = J(U^*(t))$ and $\tilde{J}(t) = J(\tilde{U}(t))$, respectively. Note that, by optimality, it follows that $J^*(t) \leq \tilde{J}(t)$ for all $t \geq 0$.
	By the definition of the optimal control $U^*(t)$ in \eqref{eq:opt-control-set} at $t$ and the construction of the candidate control $\tilde{U}(t)$ in \eqref{eq:opt-control-set-t1}, and from \eqref{eq:diseq}, we get
	\begin{align}
		\tilde{J}(t+1) & = J^*(t) + \!\! \sum_{i=1}^n \!\! \left[\! - l_i(a_i(0|t), d_i(0|t), a_{i,c}^\dag, d_{i,c}^\dag, u_i(0|t), \ov u_i)\right] +\nonumber \\
		& \qquad \quad + \!\! \sum_{i=1}^n \!\! \left[\! -Q^a_i {a_{i,c}^\dag}^2 + Q^d_i {d_{i,c}^\dag}^2 +L_i \ov u_i^2  \right] \label{eq:diseq2}\\
		& \leq J^*(t)\,. \nonumber
	\end{align}
	%where $l_j^*({a_{i,c}^*}^2, {d_{i,c}^*}^2, \ov u_j^2) = -Q^a_i {a_{i,c}^*}^2 + Q^d_i {d_{i,c}^*}^2 +L_j \ov u_j^2 $. 
	Therefore, 
	\begin{equation}\label{eq:diseq3}J^*(t+1) \leq \tilde{J}(t+1) \leq J^*(t)\,,\end{equation}
	and hence the optimal cost of \eqref{eq:opt} $J^*(t)$ is a non-increasing function and lower bounded by \eqref{eq:diseq}. Therefore, it admits a %non-negative 
	finite limit $J_\infty^*= \ds \lim_{t \to \infty} J^*(t)$. Moreover, combining \eqref{eq:diseq2} and \eqref{eq:diseq3} yields
	\begin{align*}
		\mspace{-8mu} J^*(t+1) - J^*(t) &\leq \sum_{i=1}^n \left[ Q^a_i a_i^2(0|t) - Q^d_i d_i^2(0|t) - L_i u_i^2(0|t) \right] +\\
		&  \quad +\sum_{i=1}^n \left[ -Q^a_i {a_{i,c}^\dag}^2 + Q^d_i {d_{i,c}^\dag}^2 +L_i \ov u_i^2  \right] \\
%		& \qquad \quad= \sum_{i=1}^n \left[ Q^a_i (a_i(0|t)^2 -{ a_{i,c}^*}^2) - Q^d_i (d_i(0|t)^2 - {d_{i,c}^*}^2 )\right] + \\
%		 & \qquad \quad +\sum_{i=1}^n \left[ -L_i(u_i(0|t)^2 - \ov u_i^2) \right]\\
		&\leq 0\,.
	\end{align*}
	%perchè all'istante simulativo successivo, non ho il primo istante del tempo prima, ma ho il tempo dopo che al minimo fa come l'ottimo costante 
	Since $J^*(t)$ converges, the first term $J^*(t+1) - J^*(t)$ tends to zero as $t \to \infty$. The inequality above implies that also the second term, that is 
	$$\mspace{-4mu}  \sum_{i=1}^n \mspace{-3mu}\left[ Q^a_i (a_i(0|t)^2 \mspace{-2mu}-{ a_{i,c}^\dag}^2 \mspace{-2mu}) - Q^d_i (d_i(0|t)^2 \mspace{-2mu} - {d_{i,c}^\dag}^2 \mspace{-2mu} ) \mspace{-2mu} - \mspace{-2mu} L_i(u_i(0|t)^2 \mspace{-2mu}- \ov u_i^2) \right]\mspace{-2mu},$$
	should go to zero. Hence each summand must converge to zero as $t\to\infty$, which implies $\ds \lim_{t \to \infty} a(t) = a^\dag$, $\ds \lim_{t \to \infty} d(t) = d^\dag$, $\ds \lim_{t \to \infty} u(t) = \ov u$. Since $x(t)$ evolves as a continuous function of $a(t)$ and $d(t)$, it follows also that $\ds \lim_{t \to \infty} x(t) = x^\dag$. We conclude that the MPC trajectory converges to the desired adoption-diffused equilibrium, thus completing the proof.
\end{proof}

\section{Simulation Results}\label{sec:simulations}
%\begin{figure}
%	\centering
%	\includegraphics[scale=0.8]{}\\
%	\includegraphics[scale=0.8]{}
%	\caption{Numerical simulation of the aggregate uncontrolled dynamics \eqref{eq:vector_model} (left) and the corresponding solution under the MPC algorithm \ref{alg:1} (right).\\}
%	\label{fig:mpc}
%\end{figure}
%
%\begin{figure}
%	\hspace{-8pt}
%	\includegraphics[scale=0.475]{}
%	\hspace{-18pt}
%	\includegraphics[scale=0.475]{}
%	\caption{Numerical simulation of uncontrolled dynamics \eqref{eq:vector_model} (left) and the corresponding solution under the MPC algorithm \ref{alg:1} (right) in each community.}
%	\label{fig:mpc2}
%\end{figure}
\begin{figure}
	\hspace{-8pt}
	\subfloat[][]{\includegraphics[scale=0.5]{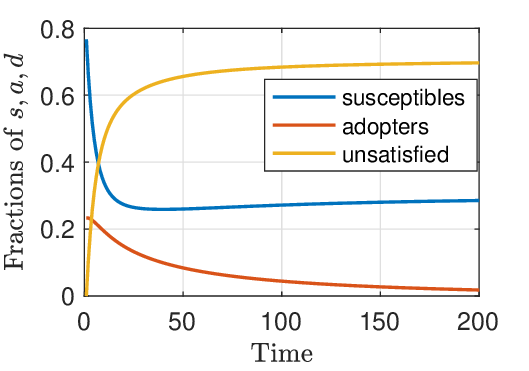}}
	\hspace{-10pt}
	\subfloat[][]{\includegraphics[scale=0.49]{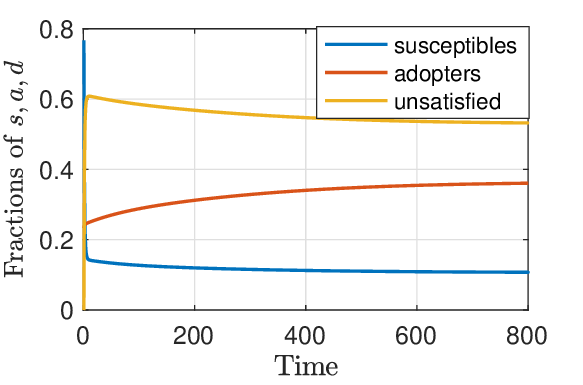}} 
	\caption{{Numerical simulations of the aggregate uncontrolled dynamics \eqref{eq:vector_model} in two different scenarios: (a) the adoption-free equilibrium is globally stable, (b) the adoption-free equilibrium is unstable and the adoption-diffused equilibrium occurs.}}
	\label{fig:mpc}
\end{figure}

\begin{figure}
	\hspace{-4pt}
	\subfloat[][]{\includegraphics[scale=0.5]{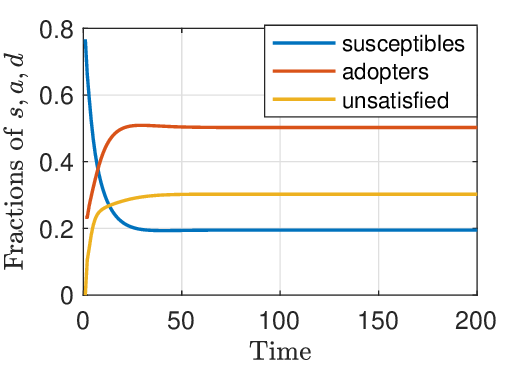}}
	\hspace{-10pt}
	\subfloat[][]{\includegraphics[scale=0.5]{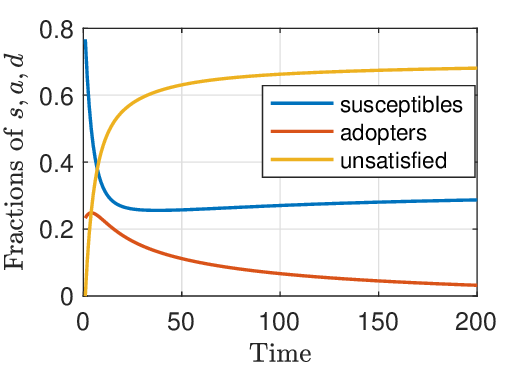}} \\
	\centering
	\subfloat[][]{\includegraphics[scale=0.51]{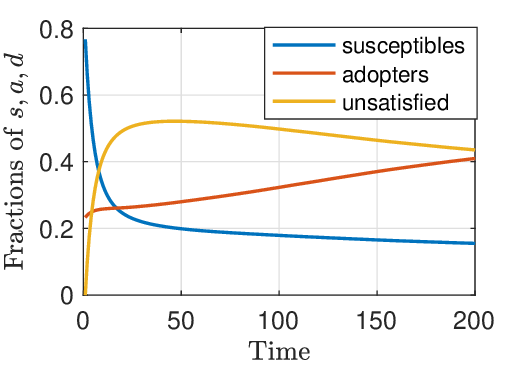}}
	\caption{Numerical simulation of the aggregate controlled dynamics under different intervention strategies. (a) MPC solution with opinion shaping \eqref{eq:opinion-control}. (b) MPC solution with adoption propensity enhancement \eqref{eq:beta-control}. (c) MPC solution with dissatisfaction reduction \eqref{eq:delta-control}.}
	\label{fig:mpc2}
\end{figure}
%\begin{figure}
%	\hspace{-4pt}
%	\includegraphics[scale=0.6]{./mpc2-aggr-beta-nocontrol.eps}
%	\hspace{-13pt}
%	\includegraphics[scale=0.6]{./mpc2-aggr-beta.eps}
%	\caption{Numerical simulation of the aggregate uncontrolled dynamics \eqref{eq:vector_model} (left) and the corresponding solution under the MPC algorithm \ref{alg:1} (right) with control on the $\beta$ transition rate. }
%	\label{fig:mpc3}
%\end{figure}

%\begin{figure}
%	\centering
%	\includegraphics[scale=0.7]{./mpc1-nocontrol.eps}\\
%	\includegraphics[scale=0.6]{}
%	\caption{Numerical simulation of the aggregate uncontrolled dynamics \eqref{eq:vector_model} (left) and the corresponding solution under the MPC algorithm \ref{alg:1} (right) with control on $\delta$ transition rate \eqref{eq:delta-control}.}
%	\label{fig:mpc4}
%\end{figure}

In this section, we present numerical simulations to illustrate the coupled dynamics of innovation adoption and opinion formation among $n$ interacting communities over two distinct networks. Model parameters are randomly generated, and the initial conditions correspond to an early stage of diffusion, where only a few individuals have adopted the innovation and no community has dissatisfied members.  

Figure \ref{fig:mpc} shows two different simulations of the uncontrolled aggregate adoption dynamics, obtained by summing over $n=10$ communities. In Figure \ref{fig:mpc}(a), the adoption-free equilibrium is globally stable, and adoption eventually vanishes. This indicates that, starting from an early stage with only a few initial adopters, innovation diffusion is not self-sustaining without intervention and eventually disappears from the system. This scenario will be used as the baseline configuration to evaluate different control policies. The goal is to assess whether suitable interventions can drive the system toward the diffused equilibrium starting from a configuration where adoption would naturally die out.
On the other hand, Figure \ref{fig:mpc}(b) corresponds to a different parameters set and different initial opinions, leading to instability of the adoption-free equilibrium and convergence to the adoption-diffused equilibrium.

Figure \ref{fig:mpc2} compares instead the outcomes of three different control strategies, all applied under the same budget constraint and from identical initial conditions. The optimal control problems are solved using a sequential quadratic programming (SQP) method, in which a quadratic programming (QP) subproblem is solved at each iteration.  The qualitative results suggest the following:
\begin{itemize}
	\item The adoption propensity enhancement policy is not feasible from the initial state. With limited resources and only a small number of adopters at the beginning, this intervention fails to maintain adoption and the system does not converge to a diffusion equilibrium.
	\item Both opinion shaping and dissatisfaction reduction prove to be feasible strategies. In both cases, the system evolves toward a stable state in which a positive fraction of adopters is sustained over time.  
	\item Among these, dissatisfaction reduction control seems to outperform the opinion-based interventions, leading to a larger long-term adoption level if the control could be applied for a longer time interval. This suggests that policies aimed at reducing dissatisfaction, by improving users’ experience and preventing adopters from abandoning the innovation, are particularly effective in sustaining diffusion.  
\end{itemize}
These findings should be interpreted with caution, as they are based on qualitative simulations. A complete comparison of intervention strategies would require a systematic quantitative analysis, including considerations of fairness, sensitivity to parameter choices, and dependence on network structure. Nonetheless, the simulations provide useful intuition about the relative impact of different levers, and they point to dissatisfaction control as a promising mechanism for promoting long-term adoption.  

To better highlight the balance between policy effectiveness and the effort required to implement it, we introduce two simple performance metrics:  
\begin{itemize}
	\item \emph{Total fraction of adopters over the time horizon}, which captures the overall effectiveness of the intervention in sustaining and promoting innovation diffusion. A higher value reflects a more successful policy in terms of long-term adoption.  
	\item \emph{Control cost}, which measures the total intervention effort that policymakers or stakeholders need to invest. This indicator accounts for the resources or incentives required to implement a given strategy.  
\end{itemize}
The results in Figure \ref{fig:pareto} clearly show that the MPC strategy achieves a larger fraction of adopters while keeping the control cost at a comparable, or even lower, level. This advantage stems from the predictive nature of MPC, which optimizes actions dynamically over the planning horizon, in contrast to the static CCP that applies the same intervention at all times.  

Overall, these findings underline the importance of adaptive, forward-looking strategies: by anticipating future dynamics and allocating resources more efficiently, MPC-based interventions provide a more effective and sustainable way to support innovation diffusion compared to constant policies.  

\begin{figure}
	\centering
	\includegraphics[scale=0.48]{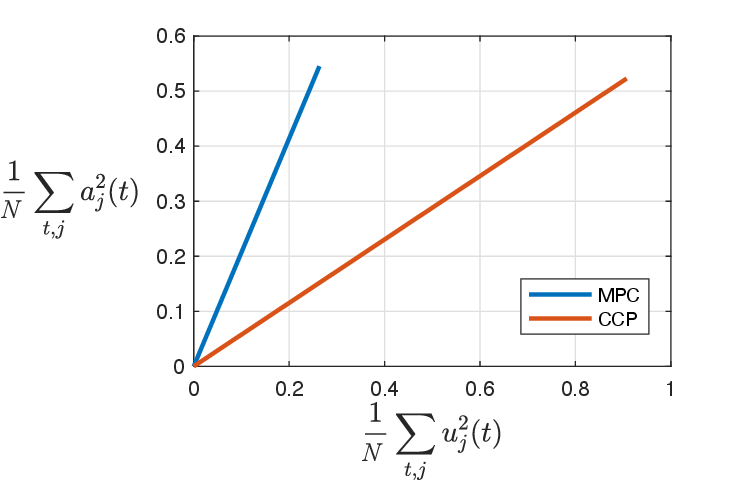}
	\caption{Control cost vs effectiveness for constant control policy (CCP) \eqref{eq:opt-constant} (in red) and MPC algorithm \ref{alg:1} (in blue).}
	\label{fig:pareto}
\end{figure}
%Figure \ref{fig:fig1} shows the adoption dynamics for the case where the initial opinions are set to $x(0) = [0.38, 0.45, 0.38, 0.50, 0.37, 0.70, 0.15, 0.07, 0.08, 0.04]$. The left plot illustrates the uncontrolled system, while the right plot presents the solution under the MPC algorithm. Similarly, Figure \ref{fig:fig2} depicts another scenario with initial opinions $x_0 = [0.40, 0.50, 0.60, 0.40, 0.30, 0.20, 0.20, 0.40, 0.50, 0.20]$, which are slightly more pro-innovation than in the previous case. In both cases, without control, the fraction of adopters tends to zero over time. However, when the MPC algorithm is applied, the solution maintains a positive fraction of adopters across all communities. This suggests that strategically influencing opinions, while potentially costly, can be an effective way to sustain innovation adoption. 

\section{Conclusion}\label{sec:conclusion} 
In this work, we introduced a controlled adoption-opinion model that couples innovation adoption dynamics with opinion formation processes on a multilayer network. The model captures how both social influence and individual satisfaction shape the diffusion of sustainable behaviors. We analyzed the equilibrium points of the system and investigated their stability properties, deriving both sufficient conditions for sustained adoption and simpler, more conservative stability criteria that offer greater interpretability.

Building on this theoretical framework, we formulated an optimal control problem and proposed a Model Predictive Control (MPC) strategy. The MPC approach allows policymakers to intervene dynamically, updating decisions based on the predicted system evolution. We compared three interventions: (i) shaping opinions through external influence, (ii) enhancing adoption propensity and (iii) reducing dissatisfaction. Simulations showed that $\beta$-control is not feasible under low initial adoption and limited budget, while opinion-based and $\delta$-control strategies can sustain adoption. Among these, reducing dissatisfaction proved most effective, as retaining adopters and improving their experience leads to higher long-term adoption levels. Overall, MPC interventions outperform static constant policies, achieving greater effectiveness at comparable or lower costs. This highlights the importance of adaptive strategies for sustaining innovation diffusion, even when uncontrolled dynamics would collapse to zero adoption.  

Future work will extend this analysis with systematic quantitative comparisons, considering uncertainty, fairness and network heterogeneity. Another direction is the integration of empirical data, such as surveys or adoption records, to calibrate and validate the model in real-world settings like sustainable mobility or energy transition, thus providing actionable guidance for policy design.

\section*{References}
\bibliographystyle{IEEEtran}
\bibliography{bib}

\appendices

%\section{Proof of Proposition \ref{prop:invariant}}
%\label{sec:proof-invariant}
%%\begin{proof}[Proposition \ref{prop:invariant}]
%We show the well-posedness of the coupled model \eqref{eq:adoption-model}-\eqref{eq:opinion-model} by induction. 
%Suppose that at time $t$ we have $s(t)+ a(t)+ d(t) = \1$. From \eqref{eq:adoption-model}, we get
%$$s_i(t+1)+ a_i(t+1)+ d_i(t+1) =s_i(t)+ a_i(t)+ d_i(t) = 1$$
%for all $i$. Then $s(t), a(t),d(t) \in [0,1]^{\V}$ for all $t\geq0$. 
%Suppose now that a time $t$, $x(t) \in [0,1]$, then from \eqref{eq:opinion-model},
%$$x(t+1) \leq (I-\Lambda-\Xi) x(0) + \Lambda \1 + \Xi \1 \leq \1\,,$$
%and 
%$x(t+1) \geq (I-\Lambda-\Xi) x(0) \geq \0\,,$
%then $x(t+1) \in [0,1]^{\V}$.
%%\end{proof}\medskip

%\section{Proof of Theorem \ref{theo:diffused-equilibrium}}\label{sec:proof-stab}

\begin{IEEEbiography}[{\includegraphics[width=1in,height=1.25in,clip,keepaspectratio]{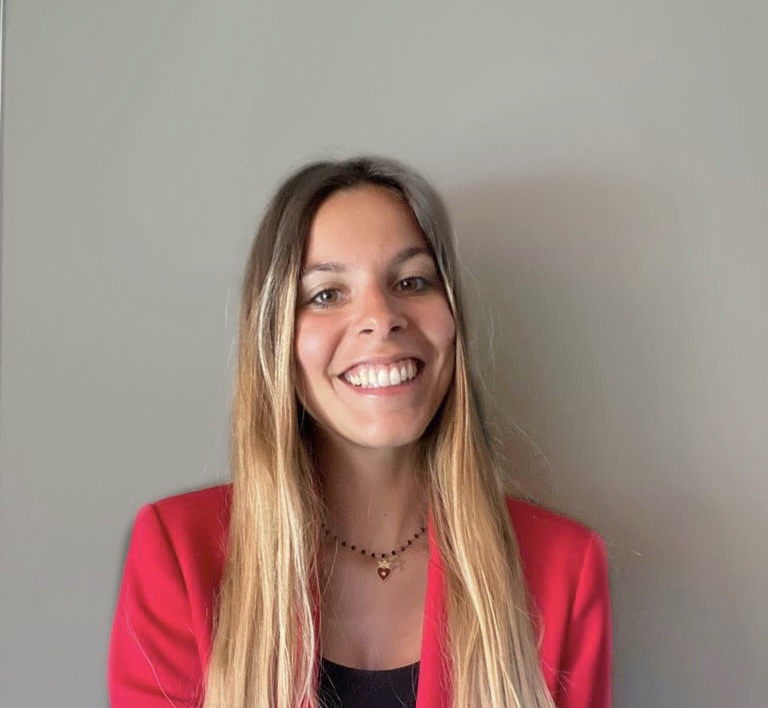}}]{Martina Alutto} received the B.Sc.~and the M.S.~(cum laude) in Mathematical Engineering from  Politecnico di Torino, Italy, in  2018 and 2021, respectively and the PhD~(cum laude) in Pure and Applied Mathematics at the Department of Mathematical Sciences, Politecnico di Torino, Italy. She was a Research Assistant at the National Research Council (CNR-IEIIT), Torino, Italy. She is currently a Postdoctoral Researcher with the Division of Decision and Control Systems, KTH Royal Institute of Technology, SE-100 44 Stockholm, Sweden. She was Visiting Student at Cornell University, Ithaca, NY in 2023. Her research interests focus on analysis and control of network systems, with application to epidemics and social networks.
\end{IEEEbiography}
\begin{IEEEbiography}[{\includegraphics[width=1in,height=1.25in,clip,keepaspectratio]{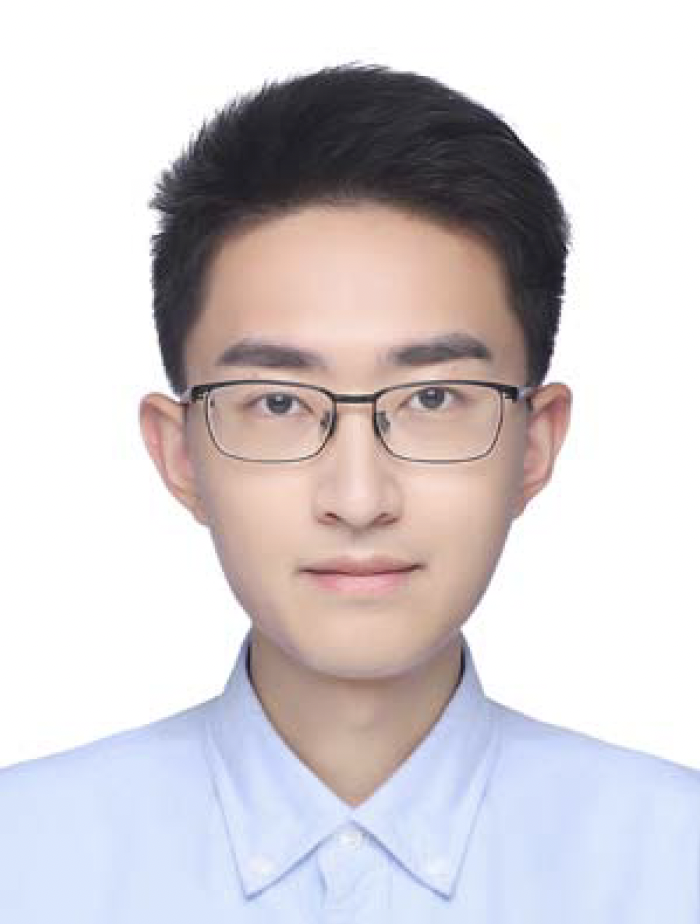}}]{Qiulin Xu} received the B.E. degree in automation from the School of Artificial Intelligence and Automation,Huazhong University of Science and Technology,Wuhan, China, in 2019, and the M.E. degree in control engineering from the Department of Automation,University of Science and Technology of China, Hefei, China, in 2022. He is currently working toward the Ph.D. degree in artificial intelligence with the Institute of Science Tokyo (formerly, Tokyo Institute of Technology, until September 2024),Yokohama,Japan. His research interests include cyber-physical-systems security, event-based state estimation, epidemic modeling and control, and social networks.
\end{IEEEbiography}
\begin{IEEEbiography}[{\includegraphics[width=1in,height=1.25in,clip,keepaspectratio]{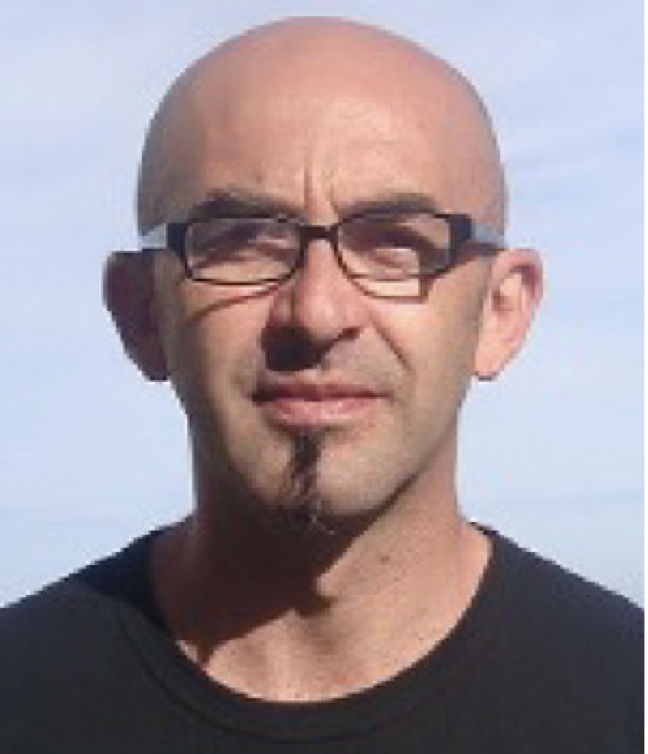}}]{Fabrizio Dabbene} is a Director of Research at the institute IEIIT of the National Research Council of Italy (CNR), where he coordinates the Information and Systems Engineering Group. He has held visiting and research positions with The University of Iowa, Penn State University, and the Russian Academy of Sciences, Institute of Control Science, Moscow, Russia. He has authored or coauthored more than 150 research papers and two books. Dr. Dabbene was an Elected Member of the Board of Governors, from 2014 to 2016. He has served as the vice president for publications, from 2015
to 2016. He has also served as an Associate Editor for Automatica (2008–2014), and IEEE Transactions on Automatic Control (2008–2012) and as Senior Editor of the IEEE Control Systems Society Letters (2018–2023), and he is currently Senior Editor for the IEEE Transactions on Control Systems Technology. He chaired the IEEE-CSS Italy Chapter (2019–2024) and since 2023 he serves as NMO representative for Italy at the International Federation of Automatic Control (IFAC). He is recipient of the 2024 IEEE CSS Distinguished Member Award.
\end{IEEEbiography}
\begin{IEEEbiography}[{\includegraphics[width=1in,height=1.25in,clip,keepaspectratio]{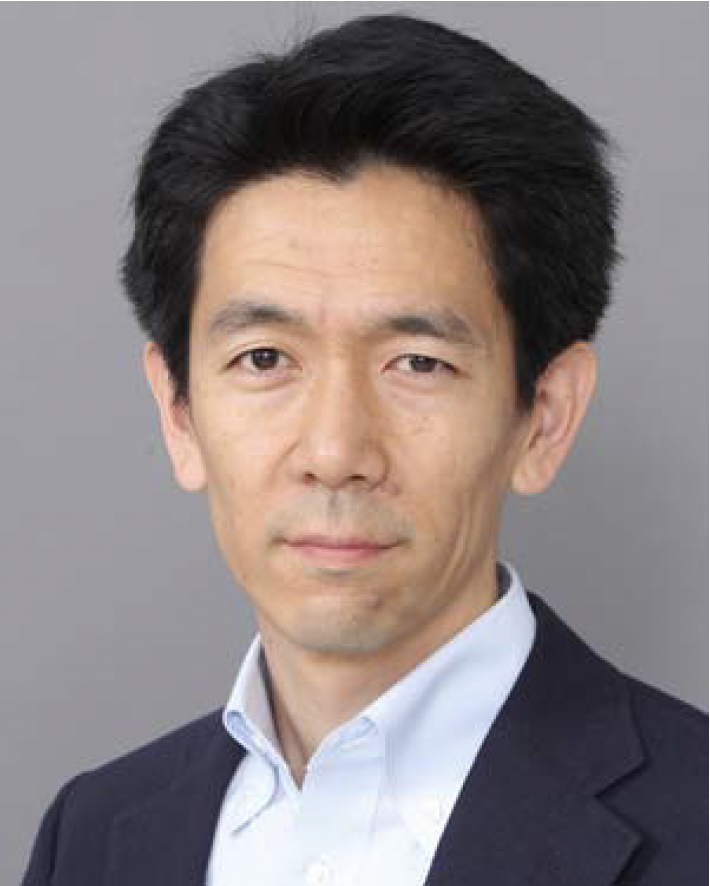}}]{Hideaki Ishii} received the M.Eng.degree from Kyoto University, Kyoto, Japan, in 1998,and the Ph.D. degree from the University of Toronto,Toronto, ON, Canada, in 2002. He was a Postdoctoral Research Associate with the University of Illinois Urbana-Champaign, Champaign, IL, USA, from2001–2004, and a Research Associate with The University of Tokyo, Bunkyo, Japan, from 2004 to 2007. From 2007 to 2024, he was an Associate Professor and then a Professor with the Department of Computer Science, Tokyo Institute of Technology, Meguro, Japan. Since 2024, he has been currently a Professor with the Department of Information Physics and Computing, The University of Tokyo. From 2014–2015, he was a Humboldt Research Fellow with the University of Stuttgart, Stuttgart, Germany. He has also been on visiting positions with CNR-IEIIT, Politecnico di Torino, Torino, Italy, Technical University of Berlin, Berlin, Germany, and the City University of Hong Kong, Hong Kong. His research interests include networked control systems, multi-agent systems, distributed algorithms, and cyber-security of control systems. He has been an Associate Editor for \emph{Automatica}, IEEE CONTROL SYSTEMS LETTERS, IEEE TRANSACTIONS ON AUTOMATIC CONTROL, IEEE TRANSACTIONS ON CONTROL OF NETWORK SYSTEMS, and \emph{Mathematics of Control, Signals, and Systems}. He was a Vice President for the IEEE Control Systems Society from 2022 to 2023, and an elected Member of the IEEE CSS Board of Governors from 2014 to 2016. He was the Chair of the IFAC Coordinating Committee on Systems and Signals from 2017 to 2023, and the Chair of the IFAC Technical Committee on Networked Systems from 2011 to 2017. He was also the IPC Chair for the IFAC World Congress 2023 held in Yokohama, Japan.He was the recipient of the IEEE Control Systems Magazine Outstanding Paper Award in 2015.
\end{IEEEbiography}
\begin{IEEEbiography}[{\includegraphics[width=1in,height=1.25in,clip,keepaspectratio]{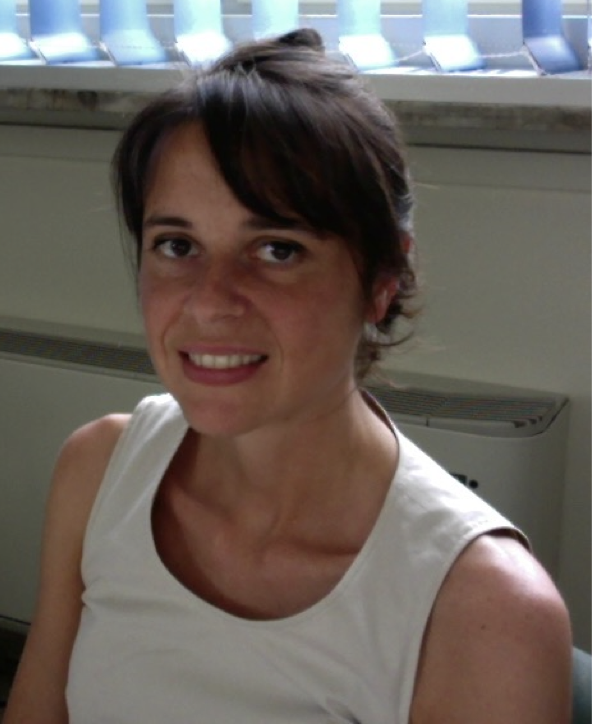}}]{Chiara Ravazzi} is a Senior Researcher at the Italian National Research Council (CNR-IEIIT) and adjunct professor at the Politecnico di Torino. She obtained the Ph.D. in Mathematical Engineering from Politecnico di Torino in 2011. In 2010, she spent a semester as a visiting scholar at the Massachusetts Institute of Technology (LIDS), and from 2011 to 2016, she worked as a post-doctoral researcher at Politecnico di Torino (DISMA, DET). She joined the Institute of Electronics and Information Engineering and Telecommunications (IEIIT) of the National Research Council (CNR) in the role of a Tenured Researcher (2017-2022). Furthermore, she served as an Associate Editor for IEEE Transactions on Signal Processing from 2019 to 2023, and she currently holds the same position for IEEE Transactions on Control Systems Letters (since 2021) and the European Journal of Control (since 2023). She has achieved the national scientific qualification as a full professor in the field of Automatica (09/G1) and as Associate Professor in the area of Telecommunications (09/F2).
\end{IEEEbiography}

\end{document}